\begin{document}
\theoremstyle{plain}
\newtheorem{thm}{Theorem}[section]
\newtheorem{theorem}[thm]{Theorem}
\newtheorem*{theorem2}{Theorem}
\newtheorem{lemma}[thm]{Lemma}
\newtheorem{corollary}[thm]{Corollary}
\newtheorem{corollary*}[thm]{Corollary*}
\newtheorem{proposition}[thm]{Proposition}
\newtheorem{proposition*}[thm]{Proposition*}
\newtheorem{conjecture}[thm]{Conjecture}
\theoremstyle{definition}
\newtheorem{construction}[thm]{Construction}
\newtheorem{notations}[thm]{Notations}
\newtheorem{question}[thm]{Question}
\newtheorem{problem}[thm]{Problem}
\newtheorem{remark}[thm]{Remark}
\newtheorem{remarks}[thm]{Remarks}
\newtheorem{definition}[thm]{Definition}
\newtheorem{claim}[thm]{Claim}
\newtheorem{assumption}[thm]{Assumption}
\newtheorem{assumptions}[thm]{Assumptions}
\newtheorem{properties}[thm]{Properties}
\newtheorem{example}[thm]{Example}
\newtheorem{comments}[thm]{Comments}
\newtheorem{blank}[thm]{}
\newtheorem{observation}[thm]{Observation}
\newtheorem{defn-thm}[thm]{Definition-Theorem}

\def\supp{\operatorname{supp}}


\title[Weyl invariant polynomial and deformation quantization]{Weyl invariant polynomial and deformation quantization on K\"ahler manifolds}

\author{Hao Xu}
        \address{Center of Mathematical Sciences, Zhejiang University, Hangzhou, Zhejiang 310027, China;
        Department of Mathematics, Harvard University, Cambridge, MA 02138, USA}
        \email{haoxu@math.harvard.edu}

        \begin{abstract}
        Given a polynomial $P$ of partial derivatives of the K\"ahler metric, expressed as a linear combination of directed multigraphs, we prove a simple criterion
        in terms of the coefficients for $P$ to be an invariant polynomial, i.e. invariant under the transformation of coordinates. As applications, we
        prove an explicit composition formula for covariant differential operators under a canonical basis, also known as invariant differential operators in the case of bounded symmetric domains. We also prove a general explicit
        formula of star products on K\"ahler manifolds.
        \end{abstract}

\keywords{Weyl invariant polynomial, star product}
\thanks{{\bf MSC(2010)}  53D55 32J27}

    \maketitle

\section{Introduction}

Let $ds^2=g_{ij}dx^i dx^j$ be the metric tensor in a local frame on a Riemannian manifold. Consider the algebra generated by
partial derivatives of the metric $\{g_{ij\,\alpha}\}_{|\alpha|\geq1}$,
it can be shown that all polynomials in the variables $\{g_{ij\,\alpha}\}$ invariant under
coordinate transformations,
arise from complete contractions of covariant differentiations of curvature tensors.
The proof requires H. Weyl's classical invariant theory for orthogonal groups by restriction
to the normal coordinate systems. Weyl's invariant theory also played an important role in
Liu's remarkable proof \cite{Liu1,Liu2} of Witten's formula about intersection numbers of moduli
spaces of principal bundles on a compact Riemann surface. Weyl's invariant theory also has important applications in
Atiyah-Singer index theory (cf. \cite{Gil}) and in Fefferman's program \cite{Fef}.

On the other hand, a natural question is: Given a polynomial $P$ in the variables
$\{g_{ij\,\alpha}\}$, find a criterion solely in terms of the coefficients of $P$
to determine whether $P$ is invariant. This problem is still vague in general.
In this paper, we give a somewhat satisfactory solution for K\"ahler manifolds.

Let $(M,g)$ be a K\"ahler manifold of dimension $n$ with K\"ahler form
$$\omega_g=\frac{\sqrt{-1}}{2\pi}\sum_{i,j=1}^n
g_{i\overline{j}}dz_i\wedge dz_{\overline{j}}.$$
Thanks to the K\"ahler condition, we can canonically associate a polynomial in the variables $\{g_{i\bar j\,\alpha}\}_{|\alpha|\geq1}$ to a semistable digraph $H$,
such that each vertex represents a partial derivative of $g_{i\bar j}$ and each edge represents the contraction of a pair of indices.
Let $\sum_{H}c(H)H$ be a linear combination of semistable graphs. In Corollary \ref{t4}, we shall give a simple criterion
to characterize its invariantness. Similar criterion will be proved for covariant differential operators in Corollary \ref{t10}.

Recall that a covariant differential operator $T^{\beta_1\cdots\beta_p}f_{/\beta_1\cdots\beta_p}$ is constructed through contractions
of curvature tensors,
$$T^{\beta_1\cdots\beta_p}=g^{\ast\ast}\cdots g^{\ast\ast}R_{\ast\ast\ast\ast/\ast\cdots\ast}\cdots R_{\ast\ast\ast\ast/\ast\cdots\ast}.$$
Their linear combinations obviously form an algebra $\mathscr R$ under the Leibnitz rule of covariant derivatives. However, we do not have a canonical basis in terms of
covariant derivatives of curvature tensors, due to additional relations like Bianchi identities and Ricci formulae. On the other hand,
the polynomials in $\{g_{i\bar j\,\alpha}\}$ associated to stable graphs form a canonical basis of $\mathscr R$. In Theorem \ref{comp}, we will prove an explicit composition formula
under this basis. On a bounded symmetric domain $\Omega$ of rank $r$, Engli\v s \cite{Eng} proved that $\mathscr R$ is equal to the algebra $\mathscr D(\Omega)$ of invariant differential operators. It is well known that $\mathscr D(\Omega)$ is a commutative algebra freely generated by $r$ algebraically independent elements. It is
an interesting and important problem to construct a set of generators explicitly (see \cite{AO, Shi, UU, Yan, Zha}).
In particular, Engli\v s \cite{Eng0} gave a set of generators using coefficients in the asymptotic expansion of the Berezin transform. From \cite{Xu2}, Engli\v s' generators can be expressed in terms of a summation over graphs (cf. Equation
\eqref{eqQ2}).

The motivation of this paper comes from deformation quantization.
Deformation quantization on a symplectic manifold $M$ was introduced in the pioneering work of Bayen et al. \cite{BFF} as a deformation of the usual
pointwise product of $C^{\infty}(M)$ into a noncommutative associative
$\star$-product of the formal series $C^\infty(M)[[\nu]]$. The celebrated work of
Kontsevich \cite{Kon,Kon2} completely solved existence and classification of star-products up to gauge equivalence
on general Poisson manifolds. Kontsevich's quantization formula was written as a summation over labeled directed graphs
with two distinguished vertices and the coefficients are certain
integrals over configuration spaces. Comprehensive surveys of deformation quantization can be found in \cite{DS} for
Poisson manifolds, and \cite{Sch2} for K\"ahler manifolds.

Let us restrict to K\"ahler manifolds $(M,g)$. A (differentiable) star product is an associative $\mathbb
\mathbb C[[\nu]]$-bilinear product $\star$ such that $\forall
f_1,f_2\in C^\infty(M)$,
\begin{equation} \label{eqberdef}
f_1\star f_2=\sum_{j=0}^\infty \nu^j C_j(f_1,f_2),
\end{equation}
where the $\mathbb C$-bilinear bidifferential operators $C_j$ satisfy
\begin{equation}\label{eqone}
C_0(f_1,f_2)=f_1 f_2,\qquad C_1(f_1,f_2)-C_1(f_2,f_1)=i\{f_1,f_2\},
\end{equation}
with the Poisson bracket $\{f_1,f_2\}$ given by
\begin{equation}\label{eqpoi}
\{f_1,f_2\}=i g^{k\bar l}\left(\frac{\partial f_1}{\partial
z^k}\frac{\partial f_2}{\partial\bar z^l}-\frac{\partial
f_2}{\partial z^k}\frac{\partial f_1}{\partial\bar z^l}\right).
\end{equation}
According to \cite{Kar,BW}, a star product has the property of \emph{separation of variables} (\emph{Wick type}), if it
satisfies $f\star h=f\cdot h$ and $h\star g=h\cdot g$ for any
locally defined antiholomorphic function $f$, holomorpihc function
$g$ and an arbitrary function $h$. If the role of holomorphic and antiholomorphic variables are swapped, we call it a star
product of \emph{anti-Wick type}.

There are earlier constructions of $\star$-products on restricted types of
K\"ahler manifolds in \cite{Ber,CGR,MO}.
Karabegov \cite{Kar} solved the classification of
deformation quantizations with separation of variables for
K\"ahler manifolds. Schlichenmaier \cite{Sch} showed that
the Berezin-Toeplitz quantization gives rise to a star product,
which turns out to be a very important quantization
with many applications. See e.g. \cite{And,BMS,Cha,Eng2,Kar2,MM,Zel}.

Feynman diagrams or directed graphs are effective tools in the construction and calculation of
star products on K\"ahler manifolds. See \cite{Gam,RT,Kar3,Xu2,Xu3}.
Inspired by work of Reshetikhin and Takhtajan \cite{RT}, Gammelgaard \cite{Gam} obtained a remarkable universal formula
in terms of acyclic graphs for a
star product with separation of variables once a
classifying Karabegov form is given.
Gammelgaard's formula crucially relies on
one's ability of writing down explicit Karabegov forms, a prototypical example is Karabegov-Schlichenmaier's identification theorem \cite{KS}.
In \cite{Xu,Xu2}, we obtained an explicit formula of Berezin star product in terms of strongly connected graphs,
which was used to give a proof of an explicit formula of Berezin-Toeplitz star product
due to Gammelgaard, Karabegov and Schlichenmaier. Karabegov \cite{Kar3} recently gave a very insightful algebraic proof of Gammelgaard's formula.

We will prove in Theorem \ref{thmstar} explicit formulae of
star products whose Karabegov forms are
summations over strongly connected graphs.

\

\noindent{\bf Acknowledgements}
The author thanks Professors Kefeng Liu and Shing-Tung Yau for helpful conversations and encouragements.
The author also thanks Professors Alexander Karabegov and Martin Schlichenmaier for stimulating communications
on deformation quantization.

\vskip 30pt
\section{Covariant tensors in semistable trees} \label{secpd}

Throughout this paper, a \emph{digraph} or simply a graph $G=(V,E)$ is defined to be a finite directed multigraph which is permitted to have multi-edges and
loops.

A vertex $v$ of a digraph $G$ is called \emph{stable} if $\deg^-(v)\geq2,\ \deg^+(v)\geq2$,
i.e. both the inward and outward degrees of $v$ are no less than $2$. A vertex $v$ is called \emph{semistable} if we have
$$\deg^-(v)\geq1,\ \deg^+(v)\geq1,\ \deg^-(v)+\deg^+(v)\geq3.$$

\begin{definition}\label{dfdecotree}
A \emph{decorated tree} $T$ is a directed tree that each vertex is decorated by a finite number of outward and inward
external legs, corresponding to unbarred and barred indices respectively. $T$ is called \emph{semistable} (resp. \emph{stable}) if each vertex is semistable
(resp. stable).
The inward (resp. outward) degree of a vertex $v$ is defined to be the number of inward (resp. outward) half-edges at $v$.
Note that a half-edge may refer to
the head or tail of an edge of $T$ or an external leg.
\end{definition}

\begin{definition}
A directed edge $uv$ of a semistable decorated tree or a semistable digraph is called \emph{contractible} if $u\neq v$ and at least one of the following two conditions holds:
(i) $\deg^+(u)=1$; (ii) $\deg^-(v)=1$.
\end{definition}

\begin{lemma}\label{t1}
Let $T$ be a semistable decorated tree. Denote by $T'$ a tree obtained by contracting a finite number of contractible edges in $T$.
Then $T'$ is also semistable and an edge in $T'$ is contractible if and only if it is contractible in $T$.
\end{lemma}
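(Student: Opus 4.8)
The plan is to analyze how the operation of contracting a single contractible edge affects degrees, and then to iterate. First I would reduce to the case of contracting one edge, since $T'$ arises by a finite sequence of single contractions, and if each single contraction preserves semistability and the set of contractible edges, then so does any finite composition. So suppose $uv$ is a contractible edge of the semistable decorated tree $T$, say with $\deg^+(u)=1$ (the case $\deg^-(v)=1$ is symmetric), and let $\widetilde T$ be the tree obtained by contracting $uv$ to a single vertex $w$. The key bookkeeping fact is that $\deg^-(w)=\deg^-(u)+\deg^-(v)-1$ and $\deg^+(w)=\deg^+(u)+\deg^+(v)-1$, where the $-1$ accounts for the half-edges of the contracted edge $uv$ itself (one inward half-edge at $v$, one outward half-edge at $u$); every other half-edge — edges to other vertices and external legs — is inherited unchanged and attached to $w$. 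Using $\deg^+(u)=1$ together with $\deg^-(u)\geq1$ (so $\deg^-(u)+\deg^+(u)\geq3$ forces $\deg^-(u)\geq2$), and using $\deg^-(v)\geq1$, $\deg^+(v)\geq1$, $\deg^-(v)+\deg^+(v)\geq3$, I would check directly that $w$ is semistable: $\deg^-(w)=\deg^-(u)+\deg^-(v)-1\geq2+1-1=2$ and $\deg^+(w)=\deg^+(v)\geq1$, and $\deg^-(w)+\deg^+(w)=\deg^-(u)-1+\deg^-(v)+\deg^+(v)\geq1+3=4\geq3$. All other vertices keep their degrees, hence remain semistable, so $\widetilde T$ is semistable.

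Next I would compare the contractible edges of $T$ and $\widetilde T$. There is a natural bijection between the edges of $\widetilde T$ and the edges of $T$ other than $uv$. An edge $e=xy$ of $T$ with $x,y\notin\{u,v\}$ has both endpoints' degrees unchanged, so its contractibility status is unchanged. The edges needing care are those incident to $u$ or $v$ in $T$, which become incident to $w$ in $\widetilde T$. Here one uses the degree formulas above: since $\deg^+(w)=\deg^+(v)$, an outward condition "$\deg^+(\text{tail})=1$" at $w$ holds iff the corresponding condition held at $v$; and since $\deg^-(w)=\deg^-(u)+\deg^-(v)-1\geq2$ while also the inward condition "$\deg^-(\text{head})=1$" can be checked to match (one must note $\deg^-(w)\geq2$ so $w$ is never the "head" witnessing contractibility, and correspondingly in $T$ neither $u$ nor $v$ could serve as such a head for those same edges — for $u$ because $\deg^-(u)\geq2$, for $v$ because... here is the subtle point). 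I would handle the edges formerly incident to $v$ carefully: an edge $e'$ entering $v$ in $T$ (other than $uv$) is contractible in $T$ exactly when its tail has outward degree $1$ or $\deg^-(v)=1$; after contraction it enters $w$, contractible exactly when its tail has outward degree $1$ or $\deg^-(w)=1$. Since $\deg^-(w)\geq2$ always, I must argue $\deg^-(v)=1$ is likewise impossible here — indeed if $\deg^-(v)=1$ then the unique edge into $v$ is $uv$ itself, so there is no other edge $e'$ entering $v$, and the case is vacuous. A symmetric vacuity argument handles edges leaving $u$. The one remaining check is the edge $uv$ disappears, so there is nothing to compare for it; conversely no new edge appears.

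I would also record the symmetric case $\deg^-(v)=1$ by the obvious reversal of roles (reverse all arrow directions, which swaps the two defining inequalities and swaps "stable"/"semistable" trivially), so no separate argument is needed. I expect the main obstacle to be purely organizational rather than deep: enumerating the finitely many types of edges incident to the contracted pair and verifying in each type that the defining condition (i)/(ii) transfers correctly, being careful that the vacuous cases (when $\deg^+(u)=1$ or $\deg^-(v)=1$ forces $uv$ to be the only edge at that half-degree) are exactly what rescues the argument when $\deg^-(w)$ or $\deg^+(w)$ jumps above the threshold. Finally, for the iteration I would note that "contract a finite number of contractible edges" can be performed one at a time because, by what was just proved, an edge that was contractible in $T$ and survives a contraction remains contractible in the new tree, so the prescribed edges can be contracted in any order; semistability is preserved at each step by the single-edge case, and the claimed equivalence of contractibility in $T'$ and in $T$ follows by composing the bijections and the per-step equivalences. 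This completes the proof.
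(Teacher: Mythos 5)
Your proof is correct and follows essentially the same route as the paper's: reduce to a single edge contraction, verify semistability of the merged vertex by degree bookkeeping, and check the contractibility of each surviving edge by cases according to its incidence with $u$ or $v$ (including the vacuity observations when $\deg^+(u)=1$ or $\deg^-(v)=1$ leaves no other edge at that half-degree). The only cosmetic difference is your WLOG assumption $\deg^+(u)=1$ at the outset, where the paper instead splits into sub-cases inside the edge analysis; both amount to the same degree computations.
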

\begin{proof}
Let $uv$ be a contractible edge of $T$. Let $T'$ be the tree obtained by contracting $uv$ and $p$ the new vertex
merging $u$ and $v$. Then obviously $\deg_{T'} p\geq4$. We also have $\deg_{T'}^- p\geq1$ and $\deg_{T'}^+ p\geq1$, since
$u$ has at least one inward half-edge and $v$ has at least one outward half-edge. So we proved that $T'$ is semistable.

Let $e$ be an edge of $T$ other than $uv$. If $e$ is not incident to $u$ or $v$, then it is obvious that $e$ is contractible in $T$
if and only if it is contractible in $T'$.

If $e=vw$ is contractible in $T$, then there are two cases: (i) $\deg_{T}^- w=\deg_{T'}^- w=1$;
(ii) $\deg_{T}^- w\neq 1$ and $\deg_T^+v=1$. In Case (i), it is obvious that $e$ is also contractible in $T'$.
In Case (ii), we have $\deg_T^-v\geq2$, so the contractibility of $uv$ in $T$ implies $\deg_T^+u=1$, namely
$\deg_{T'}^+p=1$. Thus $pw$ is contractible in $T'$.

If $e=vw$ is non-contractible in $T$, then $\deg_{T'}^- w=\deg_{T}^- w\geq2$
and $\deg_{T'}^+ p\geq\deg_{T}^+ v\geq2$. So $pw$ is non-contractible in $T'$.

The same argument works when $e=wu$. We conclude the proof.
\end{proof}

\begin{definition}
A semistable decorated tree $T$ is called \emph{contractible} if all of its edges are contractible.
Denote by $\mathscr T_g(a_1\cdots a_k | \bar b_1\cdots\bar b_m)$ the set of all contractible semistable decorated trees
with external legs in the set $\{a_1\cdots a_k, \bar b_1\cdots\bar b_m\}$. Denote by $t_{k,m}(n)$ the number of $n$-vertex trees in $\mathscr T_g(a_1\cdots a_k | \bar b_1\cdots\bar b_m)$. The first values of $t_{k,m}(n)$ were listed in Table \ref{tb1} of Appendix \ref{ap}.
\end{definition}

Denote by $D(g_{a_1\bar b_1 a_2\cdots a_k\bar b_2\cdots\bar b_m})$ the canonical invariant Weyl polynomial that equals $g_{a_1\bar b_1 a_2\cdots a_k\bar b_2\cdots\bar b_m}$ at the center of a normal coordinate system.
A proof of the following theorem was outline in \cite[\S 2]{Xu}, where a contractible tree was equivalently defined as an indecomposable admissible tree.
Here we give a more direct proof.
\begin{theorem} \label{t8} Let $k,m\geq2$. Then
\begin{equation}\label{eqt1}
D(g_{a_1\bar b_1 a_2\cdots a_k\bar b_2\cdots\bar b_m})=\sum_{T\in\mathscr T_g(a_1\cdots a_k | \bar b_1\cdots\bar b_m)}(-1)^{|V(T)|+1} g_T,
\end{equation}
where $g_T$ is the Weyl invariant associated to $T$.
\end{theorem}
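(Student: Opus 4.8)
The plan is to establish \eqref{eqt1} by expanding the Weyl invariant $D(g_{a_1\bar b_1 a_2\cdots a_k\bar b_2\cdots\bar b_m})$ in normal coordinates and organizing the resulting terms according to the combinatorial structure of contractible semistable trees. The starting point is the classical fact that, at the center of a normal coordinate system, $g_{i\bar j}$ and its derivatives can be written in terms of covariant derivatives of the curvature tensor; conversely, one inverts these relations to express a given $g_{a_1\bar b_1 a_2\cdots\bar b_m}$ canonically in terms of contractions. Concretely, I would use the K\"ahler normal coordinate expansion of the metric (the ``Bochner coordinates'' expansion) in which each coefficient $g_{i\bar j\,\alpha}$ at the center is a universal polynomial in the $\{g_{k\bar l\,\beta}\}$; the point of the theorem is to identify that universal polynomial combinatorially. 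The inductive skeleton is supplied by Lemma \ref{t1}: contracting a contractible edge of a semistable tree again yields a semistable tree with a controlled contractibility structure, so one can run an induction on the number of vertices $|V(T)|$.

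The main steps, in order, would be as follows. First, I set up the base case $n=1$: a one-vertex contractible tree is just $g_{a_1\bar b_1 a_2\cdots a_k\bar b_2\cdots\bar b_m}$ itself (all indices are external legs), and the sign $(-1)^{1+1}=1$ is correct, so \eqref{eqt1} holds trivially at the center. Second, I express $D(g_{a_1\bar b_1 a_2\cdots\bar b_m})$ using the defining relation between partial derivatives of the metric and covariant derivatives of curvature in normal coordinates: each time a partial derivative is converted to a covariant one, an error term appears which is a product of lower-order metric derivatives, i.e. corresponds to grafting a subtree onto the vertex. Third, and this is the combinatorial heart, I would show that iterating this conversion exactly enumerates contractible semistable trees: a term in the expansion is indexed by a tree $T$ where each internal edge records a ``merging'' of indices produced by the Leibniz/Ricci corrections, the contractibility condition ($\deg^+(u)=1$ or $\deg^-(v)=1$ for each edge) is precisely the condition that the edge came from such a correction and can be ``undone,'' and semistability ($\deg^-+\deg^+\ge 3$ with both $\ge1$) reflects that $g_{i\bar j}$ has at least one unbarred and one barred index and that the relevant curvature corrections involve at least three indices at the merged vertex. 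Fourth, I track the sign: each edge contraction/correction contributes a factor of $-1$ (from the sign in the normal-coordinate relation $g_{i\bar j} = \delta_{i\bar j} - \text{(curvature terms)} + \cdots$), and a tree with $|V(T)|$ vertices has $|V(T)|-1$ edges, giving the factor $(-1)^{|V(T)|-1}=(-1)^{|V(T)|+1}$.

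For the inductive step I would pick a leaf edge $uv$ of $T$ (which is automatically contractible since a leaf has outward degree $1$ or inward degree $1$ at the appropriate end), contract it via Lemma \ref{t1}, apply the inductive hypothesis to the smaller tree $T'$, and then check that the single un-contraction step reproduces exactly one layer of the normal-coordinate correction with the correct sign $-1$ and the correct index bookkeeping for the external legs. One must verify that distinct trees give distinct (and non-cancelling) monomials in the $\{g_{i\bar j\,\alpha}\}$ and that every monomial appearing in $D(g_{a_1\bar b_1 a_2\cdots\bar b_m})$ arises from exactly one tree; the semistability condition is what rules out spurious trees that would not appear.

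I expect the main obstacle to be the precise matching in the third step: making rigorous the claim that the recursive normal-coordinate correction is \emph{in bijection} with contractible semistable decorated trees, with multiplicities exactly $1$. The subtlety is that the same monomial in $\{g_{i\bar j\,\alpha}\}$ could a priori be reached along several orders of correction, and one must argue — using the tree structure and the uniqueness statement in Lemma \ref{t1} about which edges are contractible — that after accounting for the combinatorial symmetry of labelling half-edges, each monomial is counted with coefficient exactly $(-1)^{|V(T)|+1}$. A clean way to sidestep order-dependence is to phrase the whole argument as an identity of formal generating functions (or equivalently to invoke the reference \cite[\S2]{Xu} where contractible trees were defined as indecomposable admissible trees) and then verify it degree by degree; but since the excerpt promises ``a more direct proof,'' I would instead carry the induction through explicitly, with the bulk of the work being the careful edge-by-edge verification that the grafting operation and the Leibniz-rule correction agree both as index contractions and as signs.
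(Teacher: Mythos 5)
Your proposal takes a different route from the paper: you want to build the identity ``from the covariant side,'' converting partial derivatives into covariant derivatives of curvature plus Christoffel corrections and matching the correction terms with contractible semistable trees, whereas the paper never touches curvature tensors at all. The paper's proof has two parts: (a) at the center of a normal coordinate system every contractible tree with at least two vertices contains a non-stable vertex and hence vanishes, so the right-hand side of \eqref{eqt1} reduces to $g_{a_1\bar b_1 a_2\cdots a_k\bar b_2\cdots\bar b_m}$; (b) the right-hand side is a covariant tensor, proved directly by writing the transformation law of each $g_T$ under $x\mapsto\phi(x)$ as a sum of ``$\phi$-trees'' and showing that every ill-decorated $\phi$-tree is produced by exactly $2^{c(v)}$ trees (per ill-decorated vertex) whose signed contributions cancel. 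Your plan replaces (b) by inheritance of covariance from curvature tensors, which is legitimate in principle, but the burden then shifts entirely to the combinatorial identification, and that is where the proposal has a genuine gap.

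Concretely: the claim that iterating the normal-coordinate correction ``exactly enumerates contractible semistable trees, with multiplicity exactly $1$ and sign $(-1)^{|V(T)|+1}$'' is the entire content of the theorem in your formulation, and you do not prove it --- you explicitly flag it as the main obstacle and propose to ``carry the induction through explicitly'' without specifying the inductive statement. The induction as sketched is not well-posed: contracting one leaf edge of a single tree $T$ and invoking the hypothesis for $T'$ does not interact correctly with the fact that the hypothesis is a statement about the \emph{sum} over all of $\mathscr T_g$, and a given $(n-1)$-vertex tree is the contraction of many distinct $n$-vertex trees (and conversely a given monomial can arise along several orders of correction, as you note). There is also a structural problem with the starting point: the passage from $D(g_{\cdots})$ to an expression in covariant derivatives of curvature is not canonical because of Bianchi and Ricci identities (the paper makes exactly this point in the introduction), so ``expand the curvature expression and collect terms'' is ambiguous until a normal form is fixed. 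Finally, two smaller inaccuracies: a leaf edge of a semistable decorated tree is \emph{not} automatically contractible (the leaf may carry external legs raising both its in- and out-degree above $1$); and the sign bookkeeping ``each correction contributes $-1$'' is asserted rather than derived from the actual Christoffel-type corrections. As it stands the proposal is a plausible outline of an alternative proof, but the decisive bijection-with-signs is missing.
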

\begin{proof}
Note that a contractible semistable tree with no less than two vertices must have a vertex which is not stable; therefore the right-hand side of
\eqref{eqt1} is equal to $g_{a_1\bar b_1 a_2\cdots a_k\bar b_2\cdots\bar b_m}$ at the center of a normal coordinate system.
Thus we need only prove that the right-hand side of
\eqref{eqt1} is a covariant tensor with indices $(a_1\cdots a_k, \bar b_1\cdots\bar b_m)$. Let $\phi$ be a local biholomorphic
mapping. Under the change of coordinates $x\mapsto\phi(x)$, we have
\begin{gather*}
g_{i\bar j}(x)=g_{p\bar q}(\phi(x))(\partial_i\phi_p)(\overline{\partial_j\phi_q})=\xymatrix@C=7mm@R=5mm{ \ar[r]^{\partial_{\bar j}\bar\phi} & \circ \ar[r]^{\partial_i\phi} & },\\
g_{i\bar j l}(x)=g_{p\bar q r}(\phi(x))(\partial_l\phi_r)(\partial_i\phi_p)(\overline{\partial_j\phi_q})+g_{p\bar q}(\phi(x))(\partial_{il}\phi_p)(\overline{\partial_j\phi_q})\\
=\begin{tabular}{c}
\xymatrix@C=7mm@R=5mm{
&&
\\ \ar[r]^{\partial_{\bar j}\bar\phi} &\circ \ar[r]_{\partial_l\phi} \ar[u]_{\partial_i\phi} & }
\end{tabular}
+\begin{tabular}{c}
\xymatrix@C=7mm@R=5mm{ \ar[r]^{\partial_{\bar j}\bar\phi} & \circ \ar[r]^{\partial_{il}\phi} & }
\end{tabular}.
\end{gather*}
The graphical expressions will make the proof much easier. In general, it is not difficult to see that
\begin{equation}\label{eqt2}
g_{a_1\bar b_1 a_2\cdots a_k\bar b_2\cdots\bar b_m}(x)=\sum_{P\in\text{partition}(a_1\cdots a_k | \bar b_1\cdots\bar b_m)}\circ_P,
\end{equation}
where $P$ runs over all partitions of the set $(a_1\cdots a_k, \bar b_1\cdots\bar b_m)$ such that no subset contains both barred and unbarred
indices. $\circ_P$ denotes a single vertex decorated by external legs which are in one-to-one correspondence with elements of $P$.
Trees in the right-hand side of \eqref{eqt2}
may be called $\phi$-trees.

Let us first look at how Equation \eqref{eqt2} works by showing that $g_{i\bar jp\bar q}-g^{r\bar s}g_{r\bar j\bar q}g_{i\bar sp}$ is a covariant
tensor. By \eqref{eqt2}, we have
\begin{gather*}\label{eqt4}
g_{i\bar jp\bar q}(x)=\begin{tabular}{c}
\xymatrix@C=7mm@R=5mm{
&&
\\ \ar[r]^{\partial_{\bar q}\bar\phi} &\circ \ar[r]_{\partial_k\phi} \ar[u]_{\partial_i\phi} &
\\ \ar[ur]_{\partial_{\bar j}\bar\phi}&&}
\end{tabular}
+\begin{tabular}{c}
\xymatrix@C=7mm@R=5mm{
&\ar[d]^{\partial_{\bar j}\bar\phi}&
\\ \ar[r]^{\partial_{\bar q}\bar\phi} &\circ \ar[r]_{\partial_{ip}\phi} & }
\end{tabular}+\begin{tabular}{c}
\xymatrix@C=7mm@R=5mm{
&&
\\ \ar[r]^{\partial_{\bar j\bar q}\bar\phi} &\circ \ar[r]_{\partial_p\phi} \ar[u]_{\partial_i\phi} & }
\end{tabular}
+\begin{tabular}{c}
\xymatrix@C=7mm@R=5mm{ \ar[r]^{\partial_{\bar j\bar q}\bar\phi} & \circ \ar[r]^{\partial_{ip}\phi} & }
\end{tabular}\\
g_{r\bar j\bar q}(x)=\begin{tabular}{c}
\xymatrix@C=7mm@R=5mm{
&\ar[d]^{\partial_{\bar j}\bar\phi}&
\\ \ar[r]^{\partial_{\bar q}\bar\phi} &\circ \ar[r]_{\partial_{r}\phi} & }
\end{tabular}
+\begin{tabular}{c}
\xymatrix@C=7mm@R=5mm{ \ar[r]^{\partial_{\bar j\bar q}\bar\phi} & \circ \ar[r]^{\partial_r\phi} & }
\end{tabular}\\
g_{i\bar sp}(x)=\begin{tabular}{c}
\xymatrix@C=7mm@R=5mm{
&&
\\ \ar[r]^{\partial_{\bar s}\bar\phi} &\circ \ar[r]_{\partial_p\phi} \ar[u]_{\partial_i\phi} & }
\end{tabular}
+\begin{tabular}{c}
\xymatrix@C=7mm@R=5mm{ \ar[r]^{\partial_{\bar s}\bar\phi} & \circ \ar[r]^{\partial_{ip}\phi} & }
\end{tabular}
\end{gather*}
By $g^{r\bar s}(x)=g^{c\bar d}(\phi(x))(\partial_c\phi_r^{-1})(\overline{\partial_d\phi_s^{-1}})$, we have
\begin{multline}\label{eqt3}
g^{r\bar s}(x)g_{r\bar j\bar q}(x)g_{i\bar sp}(x)=
\begin{tabular}{c}
\xymatrix@C=6mm@R=4mm{
\ar[dr]^{\partial_{\bar j}\bar\phi} & & &
\\ &\circ \ar[r]  & \circ
\ar[ur]^{\partial_i\phi} \ar[dr]_{\partial_p\phi} &
\\ \ar[ur]_{\partial_{\bar q}\bar\phi} &&&}
\end{tabular}
+\begin{tabular}{c}
\xymatrix@C=7mm@R=5mm{
&\ar[d]^{\partial_{\bar j}\bar\phi}& &
\\ \ar[r]^{\partial_{\bar q}\bar\phi} &\circ\ar[r] &\circ \ar[r]_{\partial_{ip}\phi} & }
\end{tabular}\\
+\begin{tabular}{c}
\xymatrix@C=7mm@R=5mm{
&&&
\\ \ar[r]^{\partial_{\bar j\bar q}\bar\phi}&\circ\ar[r] &\circ \ar[r]_{\partial_p\phi} \ar[u]_{\partial_i\phi} & }
\end{tabular}
+\begin{tabular}{c}
\xymatrix@C=7mm@R=5mm{ \ar[r]^{\partial_{\bar j\bar q}\bar\phi}&\circ\ar[r] & \circ \ar[r]^{\partial_{ip}\phi} & }
\end{tabular},
\end{multline}
where we used $\sum_r(\partial_c\phi_r^{-1})(\partial_r\phi_t)=\delta_{ct}$. For the same reason, an internal edge $e=uv$ of a $\phi$-tree
could be contracted if both half-edges of $e$ has no decoration and either $\deg^+v=1$ or $\deg^-u=1$. Therefore the unique edge in the last three trees at the
right-hand side of \eqref{eqt3} could be contracted. The resulting $\phi$-trees cancel with the corresponding $\phi$-trees from $g_{i\bar jp\bar q}(x)$.
Thus we get
$$
g_{i\bar jp\bar q}(x)-g^{r\bar s}(x)g_{r\bar j\bar q}(x)g_{i\bar sp}(x)=
\begin{tabular}{c}
\xymatrix@C=7mm@R=5mm{
&&
\\ \ar[r]^{\partial_{\bar q}\bar\phi} &\circ \ar[r]_{\partial_k\phi} \ar[u]_{\partial_i\phi} &
\\ \ar[ur]_{\partial_{\bar j}\bar\phi}&&}
\end{tabular}
-\begin{tabular}{c}
\xymatrix@C=6mm@R=4mm{
\ar[dr]^{\partial_{\bar j}\bar\phi} & & &
\\ &\circ \ar[r]  & \circ
\ar[ur]^{\partial_i\phi} \ar[dr]_{\partial_p\phi} &
\\ \ar[ur]_{\partial_{\bar q}\bar\phi} &&&}
\end{tabular},
$$
which implies that $g_{i\bar jp\bar q}-g^{r\bar s}g_{r\bar j\bar q}g_{i\bar sp}$ is a covariant
tensor.

From the above discussion, we see that under the change of coordinates $x\mapsto\phi(x)$, the right-hand side of \eqref{eqt1}
\begin{equation}\label{eqt5}
\sum_{T\in\mathscr T_g(a_1\cdots a_k | \bar b_1\cdots\bar b_m)}(-1)^{|V(T)|+1} g_T(x)
\end{equation}
is equal to a summation of $\phi$-trees whose (internal or external) half-edges are decorated by indices $(\partial_{a_1}\phi\cdots \partial_{a_k}\phi, \partial_{\bar b_1}\bar\phi\cdots\partial_{\bar b_m}\bar\phi)$. In order to prove \eqref{eqt1}, it is enough to prove that for any ill-decorated $\phi$-tree $T_\phi$ (i.e. some internal half-edge is decorated or some
external leg with multiple derivatives), then its coefficient is zero in the above summation.
We need to enumerate all trees in the summation \eqref{eqt5} that may produce $T_\phi$.

Again it is illuminating to look at an example first. Consider the following two ill-decorated $\phi$-trees.
\begin{equation}\label{eqt6}
\begin{tabular}{c}
\xymatrix@C=7mm@R=4mm{
\ar[dr]^{\partial_{\bar j}\bar\phi} & & &
\\ &\circ \ar^(.3){\partial_r\phi}[r]  & \circ
\ar[ur]^{\partial_i\phi} \ar[dr]_{\partial_p\phi} &
\\ \ar[ur]_{\partial_{\bar q}\bar\phi} &&&}
\end{tabular}\qquad
\begin{tabular}{c}
\xymatrix@C=7mm@R=4mm{
\ar[dr]^{\partial_{\bar j}\bar\phi} & & &
\\ &\circ \ar[r]  & \circ
\ar[ur]^{\partial_{ip}\phi} \ar[dr]_{\partial_r\phi} &
\\ \ar[ur]_{\partial_{\bar q}\bar\phi} &&&}
\end{tabular}
\end{equation}
For the first $\phi$-tree of \eqref{eqt6}, the left vertex is ill-decorated. It may come from two contractible semistable trees with opposite signs.
$$\begin{tabular}{c}
\xymatrix@C=7mm@R=4mm{
\ar[dr]^{\bar j} &&  &
\\ &\circ \ar[r] \ar[d]^{r} & \circ
\ar[ur]^{i} \ar[dr]^{p} &
\\ \ar[ur]^{\bar q} &&&}\qquad
\xymatrix@C=5mm@R=4mm{
\ar[dr]^{\bar j} && & &
\\ &\circ \ar[r]&\circ\ar[r] \ar[d]^{r}& \circ
\ar[ur]^{i} \ar[dr]^{p} &
\\ \ar[ur]^{\bar q} &&&&}
\end{tabular}$$
For the second $\phi$-tree of \eqref{eqt6}, the right vertex is ill-decorated. It may come from two contractible semistable trees with opposite signs.
$$\begin{tabular}{c}
\xymatrix@C=7mm@R=4mm{
\ar[dr]^{\bar j} &&  &
\\ &\circ \ar[r]  & \circ
\ar[u]^{i}\ar[ur]^{p} \ar[dr]^{r} &
\\ \ar[ur]^{\bar q} &&&}\qquad
\xymatrix@C=5mm@R=4mm{
\ar[dr]^{\bar j} && & &
\\ &\circ \ar[r]&\circ\ar[r] \ar[d]^{r}& \circ
\ar[ur]^{i} \ar[dr]^{p} &
\\ \ar[ur]^{\bar q} &&&&}
\end{tabular}$$
The above process may be called ``freeing ill-decorated indices''.

For a general $\phi$-tree, we may treat each ill-decorated vertex separately. If the vertex has degree $2$,
then we have the following two ways to free ill-decorated indices, their numbers of vertices differ by $1$.
\begin{equation*}
\begin{tabular}{c}
\xymatrix@C=7mm@R=5mm{\cdots \ar[r]^{\partial_{\bar *}\bar\phi} & \circ \ar[r]^{\partial_{\bullet}\phi} & \cdots}
\end{tabular}\Longrightarrow
\begin{tabular}{c}
\xymatrix@C=7mm@R=5mm{
&&
\\ \cdots\ar[r] &\circ \ar[r] \ar[u]_{\bullet} &\cdots
\\&\ar[u]_{\bar *}&}
\end{tabular}
\begin{tabular}{c}
\xymatrix@C=7mm@R=5mm{\ar@<1.4ex>[d]^{\bar *}&\\
\cdots\circ \ar[r] & \circ\cdots\ar@<1.6ex>[u]_{\bullet}}
\end{tabular}
\end{equation*}
Namely the ill-decorated inward (resp. outward) indices may be separated and attached to either the original vertex or to a new vertex at the tail (resp. head)
of the half-edge. It is obvious that the new edge is contractible.

If a vertex $v$ has degree no less than $3$, there are $2^{c(v)}$ ways of freeing ill-decorated indices, where $c(v)$ is the
total number of decorated internal half-edges and
external legs with multiple derivatives incident to $v$. It is easy to see that they add up to zero. So we conclude the proof.
\end{proof}

As an example, we can compute directly that
\begin{multline*}
D(g_{i\bar j k\bar l p})=-R_{i\bar j k\bar l p}=-\partial_p R_{i\bar j k\bar l}+\Gamma_{pi}^\delta R_{\delta\bar j k\bar l}+\Gamma_{pk}^\delta R_{i\bar j\delta\bar l}\\
=\begin{tabular}{c}
\xymatrix@C=4mm@R=3mm{ \ar[dr]^{\bar j} & &\\  &\circ \ar[ur]^i\ar[r]
\ar[dr]_p & {\scriptstyle k}
\\  \ar[ur]_{\bar l} & &}
\end{tabular}
-\begin{tabular}{c}
\xymatrix@C=4mm@R=3mm{
\ar[dr]^{\bar j} & & &
\\ &\circ \ar[r]  & \circ
\ar[ur]^{i}\ar[r] \ar[dr]_{p} & {\scriptstyle k}
\\ \ar[ur]_{\bar l} &&&}
\end{tabular}
-\begin{tabular}{c}
\xymatrix@C=4mm@R=3mm{
\ar[dr]^{\bar j} &&  &
\\ &\circ \ar[r] \ar[d]^{p} & \circ
\ar[ur]^{i} \ar[dr]^{k} &
\\ \ar[ur]^{\bar l} &&&}
\end{tabular}\\
-\begin{tabular}{c}
\xymatrix@C=4mm@R=3mm{
\ar[dr]^{\bar j} &&  &
\\ &\circ \ar[r] \ar[d]^{k} & \circ
\ar[ur]^{i} \ar[dr]^{p} &
\\ \ar[ur]^{\bar l} &&&}
\end{tabular}
-\begin{tabular}{c}
\xymatrix@C=4mm@R=3mm{
\ar[dr]^{\bar j} &&  &
\\ &\circ \ar[r] \ar[d]^{i} & \circ
\ar[ur]^{k} \ar[dr]^{p} &
\\ \ar[ur]^{\bar l} &&&}
\end{tabular}
+\begin{tabular}{c}
\xymatrix@C=4mm@R=3mm{
\ar[dr]^{\bar j} && & &
\\ &\circ \ar[r]&\circ\ar[r] \ar[d]^{p}& \circ
\ar[ur]^{i} \ar[dr]^{k} &
\\ \ar[ur]_{\bar l} &&&&}
\end{tabular}\\
+\begin{tabular}{c}
\xymatrix@C=4mm@R=3mm{
\ar[dr]^{\bar j} && & &
\\ &\circ \ar[r]&\circ\ar[r] \ar[d]^{k}& \circ
\ar[ur]^{i} \ar[dr]^{p} &
\\ \ar[ur]_{\bar l} &&&&}
\end{tabular}
+\begin{tabular}{c}
\xymatrix@C=4mm@R=3mm{
\ar[dr]^{\bar j} && & &
\\ &\circ \ar[r]&\circ\ar[r] \ar[d]^{i}& \circ
\ar[ur]^{k} \ar[dr]^{p} &
\\ \ar[ur]_{\bar l} &&&&}
\end{tabular}
\end{multline*}
which agrees with \eqref{eqt1}.

\vskip 30pt
\section{Weyl invariants in semistable graphs} \label{secsemi}

The weight of a digraph $G$ is defined to be the integer $w(G)=|E|-|V|$.
A digraph
$G$ is \emph{stable} (resp. \emph{semistable}) if each vertex of $G$ is stable (resp. semistable).
The set of semistable and stable graphs of weight $k$ will
be denoted by $\mathcal G^{ss}(k)$ and $\mathcal G(k)$ respectively.

A digraph $G$ is called \emph{strongly connected} or \emph{strong} if there is a
directed path from each vertex in $G$ to every other vertex.
The strongly connected components (SCC) of a digraph $G$ can each be contracted to a single vertex,
the resulting graph is a directed acyclic graph (DAG), called the \emph{condensation} of $G$.
A \emph{source} (resp. \emph{sink}) of $G$ is a SCC that has only outward (resp. inward) edges in the condensation of $G$.

\begin{lemma}\label{t2}
Let $e=uv$ be a contractible edge of a semistable graph $G$. Denote by $G'$ the graph obtained by contracting $e$ in $G$.
If $e'\neq e$ is an edge of $G$ such that $e'\neq vu$,
then $e'$ is contractible in $G$ if and only if it is contractible in $G'$.
\end{lemma}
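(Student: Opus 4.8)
The plan is to mimic the case analysis in the proof of Lemma \ref{t1}, but tracking inward/outward degrees carefully since contracting an edge merges two vertices and the edge $vu$ (if it exists) becomes a loop, which is why it must be excluded. Write $p$ for the vertex of $G'$ obtained by merging $u$ and $v$. Since $e=uv$ is contractible, at least one of $\deg_G^+(u)=1$ or $\deg_G^-(v)=1$ holds. Contracting $e$ removes $e$ itself; every other edge incident to $u$ or $v$ is reattached to $p$, and (apart from the excluded edge $vu$) no edge becomes a loop. Consequently, for the merged vertex, $\deg_{G'}^-(p)=\deg_G^-(u)+\deg_G^-(v)-1$ (minus one more if a $vu$-edge exists, but that edge is excluded from the statement's conclusion) and similarly $\deg_{G'}^+(p)=\deg_G^+(u)+\deg_G^+(v)-1$; in particular $G'$ is again semistable by the same computation as in Lemma \ref{t1} (here we do not even need to re-derive semistability, since it is not part of the claim, but it is convenient).

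The core of the argument is the following. Let $e'\neq e$ be an edge of $G$ with $e'\neq vu$. If $e'$ is incident to neither $u$ nor $v$, then in $G'$ it connects the same vertices with the same orientation and the endpoints have the same in- and out-degrees, so contractibility is unchanged. The substantive cases are when $e'$ is incident to $u$ or $v$. By symmetry (reversing all edge directions swaps the roles of $u$ and $v$ and of the two clauses in the definition of contractible), it suffices to treat the cases $e'=vw$ and $e'=wv$; the cases $e'=wu$ and $e'=uw$ follow by this symmetry. Take $e'=vw$ with $w\neq u$ (the case $w=u$ is exactly the excluded edge, or, if there is a second $vu$-edge besides the excluded one, it becomes a loop and is handled trivially). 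In $G'$ this edge becomes $pw$. Now $\deg_{G'}^-(w)=\deg_G^-(w)$, so clause (ii) of contractibility ($\deg^-$ of the head equals $1$) transfers verbatim between $G$ and $G'$. For clause (i): $e'=vw$ is contractible via (i) iff $\deg_G^+(v)=1$; and $pw$ is contractible via (i) iff $\deg_{G'}^+(p)=1$, i.e. iff $\deg_G^+(u)+\deg_G^+(v)-1=1$. If $\deg_G^+(v)=1$ then, since $G$ is semistable, $\deg_G^+(u)\geq1$, forcing $\deg_G^+(u)=1$, hence $\deg_{G'}^+(p)=1$; conversely if $\deg_{G'}^+(p)=1$ then $\deg_G^+(u)=\deg_G^+(v)=1$. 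So clause (i) also transfers, and the remaining case $e'=wv$ is even easier: contracting $e$ does not change $\deg^+(w)$, and $\deg_{G'}^-(p)=1$ forces (using semistability of $u$, so $\deg_G^-(u)\geq1$) $\deg_G^-(v)=1$, and conversely, exactly as before.

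The main obstacle, such as it is, is bookkeeping rather than a genuine difficulty: one must be careful that no edge other than $e$ and the excluded $vu$ turns into a loop under contraction (multi-edges are allowed, so for instance a parallel edge $uv'$ with $v'$ distinct from $v$ causes no problem), and one must invoke semistability of $G$ at precisely the two points above to convert an out-degree-$1$ (resp. in-degree-$1$) condition at $v$ (resp. at $v$ as a head) into the corresponding condition at the merged vertex $p$. I would also remark explicitly why the edge $vu$ is excluded: when $uv$ is contractible via $\deg_G^+(u)=1$ but a reverse edge $vu$ is present, that reverse edge becomes a loop at $p$, and loops are neither contractible nor fit the dichotomy, so the statement simply does not assert anything about it. With these remarks in place the proof is complete.
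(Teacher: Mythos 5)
Your overall structure matches the paper's (the paper simply declares the proof ``almost identical'' to that of Lemma \ref{t1}, whose case analysis you are reproducing), but the key step is justified by a non sequitur. In the case $e'=vw$, forward direction of clause (i), you write: if $\deg_G^+(v)=1$ then, ``since $G$ is semistable, $\deg_G^+(u)\geq 1$, forcing $\deg_G^+(u)=1$.'' The inequality $\deg_G^+(u)\geq1$ does not force $\deg_G^+(u)=1$; a priori $u$ could have several outgoing edges, in which case $\deg_{G'}^+(p)=\deg_G^+(u)+\deg_G^+(v)-1\geq 2$ and $pw$ would fail clause (i). The implication is still true, but for a different reason --- the one the paper uses in Case (ii) of Lemma \ref{t1}: $\deg_G^+(v)=1$ together with semistability of $v$ (i.e.\ $\deg^-(v)+\deg^+(v)\geq3$) gives $\deg_G^-(v)\geq2$, so the contractibility of $e=uv$ cannot be witnessed by $\deg_G^-(v)=1$ and must be witnessed by $\deg_G^+(u)=1$. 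You never invoke the contractibility of $e$ at the one point where it is genuinely needed, so as written this step is a gap.

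Two smaller issues. First, the parenthetical ``minus one more if a $vu$-edge exists'' in your degree formula is wrong: a $vu$-edge becomes a loop at $p$, and a loop contributes to both $\deg^-(p)$ and $\deg^+(p)$, so $\deg_{G'}^{\pm}(p)=\deg_G^{\pm}(u)+\deg_G^{\pm}(v)-1$ holds in all cases (harmless, since you then discard the parenthetical). Second, in the case $e'=wv$ your ``conversely, exactly as before'' does not literally go through: if $\deg_G^-(v)=1$ then $uv$ is contractible via clause (ii) and you learn nothing about $\deg_G^-(u)$. What actually saves this case is that $\deg_G^-(v)\geq2$ automatically, since both $uv$ and $wv$ point into $v$; hence clause (ii) is vacuously false on both sides and only the unchanged condition $\deg^+(w)=1$ is relevant. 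With these repairs your argument is correct and coincides with the paper's.
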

\begin{proof}
The proof is almost identical to the proof of Lemma \ref{t1}.
\end{proof}

\begin{definition}
A semistable graph $G$ is called \emph{stabilizable} if after contractions of a finite number of contractible edges of
$G$, the resulting graph becomes stable, which is called the \emph{stabilization graph} of $G$ and denoted by $G^s$.
\end{definition}

If $G$ is the stabilization graph of $H$, then $w(G)=w(H)$.
By Lemma \ref{t2}, the stablizability of a semistable graph $G$ is independent of the order of edge-contractions.

\begin{lemma}\label{t5}
A strong semistable graph $G$ is stabilizable.
\end{lemma}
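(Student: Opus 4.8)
The plan is to show that every strong semistable graph $G$ admits a sequence of contractions of contractible edges terminating in a stable graph. The key observation is that if $G$ is already stable, there is nothing to prove, so we may assume $G$ has a vertex $v$ that is not stable, say $\deg^+(v)=1$ (the case $\deg^-(v)=1$ is symmetric). Since $G$ is strong, $v$ has at least one outgoing edge, so there is a unique edge $e=vw$, and since $G$ is strong there is a directed path from $w$ back to $v$, hence in particular $w\neq v$ (a loop at $v$ would contribute to both $\deg^+$ and $\deg^-$, but then $\deg^+(v)=1$ forces the loop to be the only outgoing half-edge, and strong connectivity with more than one vertex is contradicted — I would need to handle the one-vertex case separately, where a single vertex with a loop is automatically stable iff it has at least two loops, and semistability plus the weight being what it is gives this). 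Thus $e=vw$ with $w\neq v$ and $\deg^+(v)=1$, so $e$ is contractible.

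Next I would verify that contracting a contractible edge of a strong semistable graph yields a graph that is again strong and semistable. Semistability of $G'$ is exactly Lemma \ref{t2}'s setup — or rather, it follows from the same local degree count as in Lemma \ref{t1}: the merged vertex $p$ has $\deg^-(p)\geq 1$, $\deg^+(p)\geq 1$, and $\deg(p)\geq 4$. Strong connectivity is preserved under contraction of any edge: a directed path in $G$ between two vertices maps to a directed walk in $G'$, and conversely. So $G'$ is strong semistable with $|V(G')|=|V(G)|-1$.

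The proof is then a straightforward induction on $|V(G)|$. If $G$ is stable we are done; otherwise pick a contractible edge $e$ as above, contract it to get a strong semistable graph $G'$ with one fewer vertex, and apply the inductive hypothesis to conclude that $G'$ — and hence $G$ — is stabilizable. The base case is a strong semistable graph on one vertex: such a graph consists of a single vertex with some loops; semistability gives $\deg^-+\deg^+\geq 3$ with each of $\deg^-,\deg^+\geq 1$, and since each loop contributes one to each of $\deg^-$ and $\deg^+$, having at least one loop and total degree $\geq 3$ in fact forces at least two loops (one loop gives $\deg^-=\deg^+=1$, total $2<3$), so $\deg^-\geq 2$ and $\deg^+\geq 2$ and the vertex is already stable.

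The main obstacle I anticipate is not the induction itself but the careful bookkeeping around loops and the degenerate one- or two-vertex cases: one must make sure that "contractible" really does apply (that $u\neq v$), that a loop cannot sneak in to make a not-yet-stable vertex impossible to fix, and that strong connectivity genuinely guarantees the existence of the needed outgoing or incoming edge. Once those edge cases are dispatched, the inductive step is essentially a restatement of Lemma \ref{t1}/\ref{t2} combined with the trivial fact that edge contraction cannot destroy strong connectivity.
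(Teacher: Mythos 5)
Your proof is correct and follows essentially the same route as the paper: locate a nonstable vertex, use strong connectivity to rule out loops there and conclude its unique outgoing (or incoming) edge is contractible, contract, and iterate until stability is reached. Your version merely makes explicit the induction, the preservation of strongness and semistability under contraction, and the one-vertex base case, all of which the paper leaves implicit.
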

\begin{proof}
Let $v$ be a nonstable vertex of $G$. Then $v$ has no loop by the strongness of $G$. Moreover, $v$ has either a
unique inward or a unique outward edge, which is contractible. Thus we can always contract some edge
until a stable graph is reached.
\end{proof}

A connected semistable graph may be not stabilizable, e.g.
$\xymatrix{*+[o][F-]{1} \ar[r]^{1} &
*+[o][F-]{1}}$.

\begin{lemma}\label{t6}
Let $G$ be a stabilizable semistable graph. If the stabilization graph of $G$ is strong, then $G$ is also strong.
\end{lemma}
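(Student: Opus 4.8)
The plan is to argue by contraposition through the behavior of condensation under edge-contraction. Suppose $G$ is stabilizable with stabilization graph $G^s$, and assume $G^s$ is strong; I want to show $G$ is strong. First I would record the elementary fact that contracting a single edge $e=uv$ of a digraph $G$ cannot disconnect the condensation in a way that creates new sources or sinks: if $G$ is not strong, its condensation is a DAG with at least two vertices, hence has at least one source SCC and one sink SCC which are distinct. The key observation is that contracting a contractible edge $e=uv$ either keeps $u$ and $v$ in the same SCC (if they already lay on a common directed cycle, i.e. there was a directed path $v\rightsquigarrow u$) or, if they lay in different SCCs, merges exactly those two SCCs and leaves all other SCCs and all reachability relations among them intact. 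In the first case the condensation of $G'$ equals that of $G$; in the second case the condensation of $G'$ is obtained from that of $G$ by contracting the single edge between the SCC of $u$ and the SCC of $v$ in the condensation, which is itself a contractible edge of the DAG.

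Next I would iterate: since $G^s$ is obtained from $G$ by a finite sequence of contractible-edge contractions, the condensation $C(G^s)$ is obtained from $C(G)$ by a finite sequence of edge-contractions within the DAG $C(G)$ (some contractions acting trivially on the condensation). A contraction within a DAG never decreases the number of vertices below one and, crucially, cannot turn a non-strong graph into a strong one: if $C(G)$ has at least two vertices, then after any single contraction it still has at least one vertex, and I claim it still has at least two unless the contracted edge was the unique edge joining the only source to the only sink and these were the only two SCCs. But that last situation is impossible here: a source SCC in $C(G)$ has no inward edges, so the corresponding vertex $u$ in $G$ would satisfy $\deg^-(u)\ge 1$ only via edges inside its own SCC; combined with semistability this forces that source SCC to contain a vertex, and contracting the edge $uv$ with $u$ in the source and $v$ in the sink produces a graph whose condensation still has that (now merged) vertex as both a source and—if there were other SCCs—not the whole graph. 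I would make this precise by simply noting that if $C(G)$ has $\ge 2$ vertices then after contracting any one edge it has $\ge 1$ vertex, and a one-vertex condensation means strong; so the only way to reach a strong $G^s$ from a non-strong $G$ is if at some stage the condensation drops from two vertices to one, i.e. the two SCCs were a single source and a single sink joined by contracted edges. I then rule this out: a single source SCC $S$ and single sink SCC $T$ with $S\ne T$ means every edge of $G$ goes within $S$, within $T$, or from $S$ to $T$; pick a nonstable vertex—one must exist in $S$ or $T$ since $G$ itself is not stable here—and derive a contradiction with the fact that $G^s$ is stable and strong by tracking which edges survive.

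The cleanest route, which I would actually write, avoids the case analysis: prove the contrapositive directly. Assume $G$ is not strong. Then $C(G)$ is a DAG on $\ge 2$ vertices, so it has a source vertex $s$ and a sink vertex $t$ with $s\ne t$. Contracting a contractible edge of $G$ induces, as established above, either no change to $C(G)$ or the contraction of one edge of the DAG $C(G)$; in the latter case the result is again a DAG on $\ge 1$ vertices, and it is a DAG on $\ge 2$ vertices unless we merged two vertices that were the entire vertex set. I would then observe that we can never merge the last two: throughout the process each surviving SCC is still "semistable as a condensation vertex" in the weak sense that a source condensation-vertex corresponds to a union of original vertices receiving no edges from outside, and semistability of $G$ forbids such a piece from being trivially absorbed—more concretely, at every stage the condensation has a source and a sink, and contracting an edge from the source to the sink when these are the only two SCCs would require the source SCC to have outward degree accounted for entirely by edges to the sink, leaving, after contraction, a graph with a loop or multi-edge but the condensation already had one vertex—contradiction with "two vertices." Hence $C(G^s)$ has $\ge 2$ vertices, so $G^s$ is not strong.

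The main obstacle I anticipate is the bookkeeping in the step "contracting a contractible edge induces a contraction of at most one edge of the condensation": one must check that no SCC other than those of $u,v$ changes, that reachability among SCCs is preserved, and that when $u,v$ are already in one SCC the condensation is literally unchanged. This is where semistability and contractibility are genuinely used (contractibility guarantees $u\ne v$ and that one of $\deg^+(u)=1$, $\deg^-(v)=1$ holds, which controls how cycles through the new vertex can form), and it is the place where a sloppy argument would go wrong. Once that lemma about condensations is in hand, the rest is the short DAG argument above, and I would likely dispatch Lemma \ref{t2}-style verifications by referring to the already-proved Lemma \ref{t2}.
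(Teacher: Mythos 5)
Your overall strategy is the same as the paper's: track how contracting a contractible edge acts on the condensation, conclude that SCCs merge only one pair at a time, and then show that the last two SCCs can never be merged. Your ``key observation'' (that contracting a contractible $uv$ with $u,v$ in different SCCs merges exactly those two SCCs and creates no other new reachability) is true, and you correctly flag that it hinges on $\deg^+(u)=1$ or $\deg^-(v)=1$ --- for a general edge it is false (in a condensation containing $A\to B$, $A\to C$, $C\to B$, contracting an edge from $A$ to $B$ merges all three). But you never actually prove it. The proof is short and you should supply it: if $\deg^+(u)=1$ then $u$'s SCC is the loopless singleton $\{u\}$, every walk leaving the merged vertex $p$ must exit through an out-edge of $v$, and a return to $p$ through an in-edge of $u$ would yield a path $v\rightsquigarrow u$ in $G$, putting $u,v$ in one SCC already; hence the cycles through $p$ are exactly the cycles through $v$, and likewise no new reachability appears between other SCCs.

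The more serious gap is the final step, which is where the lemma actually lives: you must show that when exactly two SCCs $A$ (source) and $B$ (sink) remain, no edge from $A$ to $B$ is contractible. Neither of your two attempted justifications works: ``derive a contradiction \dots by tracking which edges survive'' is a promise rather than an argument, and ``leaving, after contraction, a graph with a loop or multi-edge but the condensation already had one vertex---contradiction with `two vertices'\,'' does not parse (after the contraction the condensation legitimately has one vertex; nothing contradicts anything). The correct reason is a semistability degree count, and it is exactly what the paper uses. Let $uv$ be a cross edge, $u\in A$, $v\in B$. Since $A$ is a source SCC, the in-edge of $u$ guaranteed by semistability comes from inside $A$: either it is a loop at $u$, or it comes from some $w\neq u$ in $A$, in which case strong connectivity of $A$ gives a path $u\rightsquigarrow w$ whose first edge is an out-edge of $u$ other than $uv$ (it cannot enter the sink $B$). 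Either way $\deg^+(u)\geq 2$, and symmetrically $\deg^-(v)\geq 2$, so $uv$ is not contractible. With this and the key observation written out, your contrapositive closes; as submitted, the proposal does not prove the lemma.
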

\begin{proof}
Obviously $G$ is connected. If $G$ is not strong, first assume that $G$ has two SCC's $A,B$. Then it is not difficult to see
 that any edge between $A,B$ is not contractible, a contradiction. If $G$ has more than two SCC's, consider its condensation $G'$.
Choose any edge $e$ in $G'$ which is contractible in $G$, we can contract $e$ to reduce the number SCC's of $G$ by one.
Since the stabilization graph of $G$ is strong, we can always repeat this process until we get a graph with two SCC's, which is not contractible,
a contradiction again. Therefore $G$ must be strong.
\end{proof}

\begin{theorem}\label{t3}
Let $G$ be a stable graph of weight $k$. Then
\begin{equation}\label{eqt7}
D(G)=\sum_{H\in \mathcal G^{ss}(k)}^{\text{stabilizable}}\frac{(-1)^{|V(H)|-|V(G)|}|{\rm Aut(G)}|}{|{\rm Aut(H)}|}\, H,
\end{equation}
where $H$ runs over stabilizable semistable graphs of weight $k$ whose stabilization graph is $G$.
\end{theorem}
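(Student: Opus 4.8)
The plan is to verify \eqref{eqt7} by the two standard reductions for Weyl invariants: that the two sides agree at the center of a normal coordinate system, and that the right-hand side is a coordinate-invariant polynomial; since a Weyl invariant is determined by its values at centers of normal coordinates, these two facts together force equality. (Note that $w(H)=w(G)=k$ for every $H$ occurring in the sum.) For the first reduction, suppose $H$ is stabilizable semistable with $H^{s}=G$ and $H\neq G$; then $H$ is not stable, hence has a contractible edge $uv$, hence has a nonstable vertex (one of $\deg^{+}(u)=1$, $\deg^{-}(v)=1$ holds). A nonstable vertex corresponds to a partial derivative of $g_{i\bar j}$ having a single unbarred index or a single barred index, and every such derivative vanishes at the center of a normal coordinate system; so $g_{H}$ vanishes there. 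Thus on the right of \eqref{eqt7} only the term $H=G$, entering with coefficient $(-1)^{0}\,|{\rm Aut}(G)|/|{\rm Aut}(G)|=1$, survives at the center, giving $g_{G}$ — which is also the value of $D(G)$ there.

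For invariance I would factor through single vertices. First record the identity
\[
D(G)=\Bigl(\prod_{e\in E(G)}g^{\ast\ast}\Bigr)\prod_{v\in V(G)}D(g_{v}),
\]
with $g_{v}$ the derivative of the metric carried by $v$ and $D(g_{v})$ the invariant tensor of Theorem~\ref{t8} (applicable because a stable vertex has at least two unbarred and at least two barred legs): the right side is invariant, being a contraction of invariant tensors, and at the center of a normal coordinate system it equals $\prod\delta^{\ast\ast}\cdot\prod_{v}g_{v}=g_{G}=D(G)$. Substituting $D(g_{v})=\sum_{T_{v}}(-1)^{|V(T_{v})|+1}g_{T_{v}}$ from Theorem~\ref{t8}, the sum over contractible semistable decorated trees $T_{v}$ whose external legs are the half-edges of $v$, and expanding the product gives
\[
D(G)=\sum_{(T_{v})_{v\in V(G)}}\Bigl(\prod_{v}(-1)^{|V(T_{v})|+1}\Bigr)\,g_{H((T_{v}))},
\]
where $H((T_{v}))$ is the graph obtained by gluing the trees $T_{v}$ along the edges of $G$.

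It then remains to reorganize this sum. Each $H=H((T_{v}))$ is semistable, and by Lemma~\ref{t2} the internal edges of every $T_{v}$ stay contractible in $H$ (the one-sided degrees controlling contractibility are unchanged when external legs become glued edges), so contracting them all collapses each $T_{v}$ to a point and recovers $G$; hence $H$ is stabilizable with $H^{s}=G$, and $\prod_{v}(-1)^{|V(T_{v})|+1}=(-1)^{\sum_{v}(|V(T_{v})|-1)}=(-1)^{|V(H)|-|V(G)|}$. Conversely every stabilizable semistable $H$ with $H^{s}=G$ arises this way: its set $F$ of contractible edges is intrinsic to $H$ (using stabilizability one checks that no $2$-cycle of $H$ has both edges contractible, whence Lemma~\ref{t2} shows contractibility in $H$ is unchanged by contracting other contractible edges), the components of $F$ split $H$ into contractible decorated trees by Lemma~\ref{t1}, and $H/F=G$. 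Finally ${\rm Aut}(G)$ acts on the set of tuples $(T_{v})$, the gluing map is ${\rm Aut}(G)$-invariant, the fiber over a fixed isomorphism class $[H]$ is a single orbit (an isomorphism $H\cong H'$ carries $F$ to $F$ and hence descends to an automorphism of $G$), and the stabilizer of a tuple is $\cong{\rm Aut}(H)$ (a decorated tree with labeled external legs is rigid, and conversely an automorphism of $H$ descends to $G$); by orbit--stabilizer the fiber has $|{\rm Aut}(G)|/|{\rm Aut}(H)|$ elements, and assembling everything converts the displayed sum into the right-hand side of \eqref{eqt7}.

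I expect the main obstacle to be this last step — establishing that $F$ is intrinsic to $H$, that the decomposition of a stabilizable graph over $G$ is canonical up to ${\rm Aut}(H)$, and that the ratio $|{\rm Aut}(G)|/|{\rm Aut}(H)|$ comes out exactly is the delicate bookkeeping. A more self-contained route would imitate the proof of Theorem~\ref{t8} directly: expand each $g_{H}(x)$ under a change of coordinates $x\mapsto\phi(x)$ into $\phi$-graphs and cancel the ill-decorated ones by freeing ill-decorated indices vertex by vertex; but this has the same combinatorial heart while additionally having to re-establish the compatibility with stabilization, so I would favor the factorization argument above.
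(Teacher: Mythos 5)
Your proposal is correct and follows essentially the same route as the paper: $D(G)$ is obtained by expanding each vertex of $G$ via Theorem~\ref{t8} into contractible semistable trees while keeping the incidence relations, and the coefficient $(-1)^{|V(H)|-|V(G)|}|{\rm Aut}(G)|/|{\rm Aut}(H)|$ comes from the sign in \eqref{eqt1} together with the orbit--stabilizer count for the ${\rm Aut}(G)$-action on the resulting multiset of graphs. The paper's proof is much terser (it asserts the bijection of orbits and the isotropy group ${\rm Aut}(H)$ as ``not difficult to see''), whereas you spell out the surjectivity and the intrinsic nature of the contractible edge set, which are exactly the right details to check.
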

\begin{proof}
By definition, $D(G)$ is a sum of stabilizable semistable graphs obtained by expanding each vertex of $G$ by \eqref{eqt1} as a sum of contractible semistable trees, while
keeping incidence relations of $G$.
The group ${\rm Aut}(G)$
has a natural action on the above multiset
of stabilizable semistable graphs $H$ in the expansion of $D(G)$. Then it is not
difficult to see that the set of orbits is in one-to-one correspondence with isomorphism
classes of stabilizable semistable graphs of weight $k$ and the isotropy group at $H$ is ${\rm Aut}(H)$. Therefore
the orbit of $H$ has $|{\rm Aut(G)}|/|{\rm Aut(H)}|$ graphs. The factor $(-1)^{|V(H)|-|V(G)|}$ is clear from \eqref{eqt1}. So we conclude the
proof of \eqref{eqt7}.
\end{proof}

\begin{corollary}\label{t4}
A linear combination of stabilizable semistable graphs of weight $k$
\begin{equation}\label{eqt8}
\sum_{H\in \mathcal G^{ss}(k)}^{\text{stabilizable}}c(H)\frac{(-1)^{|V(H)|}}{|{\rm Aut(H)}|}H
\end{equation}
is a Weyl invariant (i.e. invariant under coordinate transformations) if and only if
$c(H_1)=c(H_2)$ whenever $H_1,H_2$ have the same stabilization graph.
\end{corollary}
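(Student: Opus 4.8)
The plan is to deduce Corollary \ref{t4} directly from Theorem \ref{t3} by a linear-algebra argument. The space of Weyl invariants of weight $k$ is, by Theorem \ref{t8} and its consequence Theorem \ref{t3}, spanned by the polynomials $D(G)$ as $G$ ranges over stable graphs of weight $k$; moreover, since distinct stable graphs $G$ give $D(G)$ whose leading term (at the center of a normal coordinate system) is the distinct stable graph $G$ itself, the $D(G)$ are linearly independent. Hence $\{D(G) : G \in \mathcal G(k)\}$ is a basis of the space of Weyl invariants of weight $k$.

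First I would rewrite the general linear combination \eqref{eqt8} in a way that groups the semistable graphs $H$ according to their stabilization graph. For a fixed stable graph $G$, let $\mathcal S(G)$ denote the set of isomorphism classes of stabilizable semistable graphs of weight $k$ whose stabilization graph is $G$ (so $G \in \mathcal S(G)$). Every stabilizable semistable $H$ of weight $k$ lies in exactly one $\mathcal S(G)$, so
\begin{equation*}
\sum_{H}^{\text{stabilizable}} c(H)\frac{(-1)^{|V(H)|}}{|{\rm Aut}(H)|}H
= \sum_{G \in \mathcal G(k)} \ \sum_{H \in \mathcal S(G)} c(H)\frac{(-1)^{|V(H)|}}{|{\rm Aut}(H)|}H.
\end{equation*}
On the other hand, by Theorem \ref{t3},
\begin{equation*}
D(G) = (-1)^{|V(G)|}|{\rm Aut}(G)| \sum_{H \in \mathcal S(G)} \frac{(-1)^{|V(H)|}}{|{\rm Aut}(H)|}H .
\end{equation*}

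Now I would argue both directions. If $c(H_1)=c(H_2)=:c_G$ whenever $H_1,H_2$ share the stabilization graph $G$, then the double sum collapses to $\sum_{G} c_G\, (-1)^{-|V(G)|}|{\rm Aut}(G)|^{-1} D(G)$, a linear combination of Weyl invariants, hence a Weyl invariant. Conversely, suppose \eqref{eqt8} is a Weyl invariant; since $\{D(G)\}$ is a basis of the invariants, there are scalars $\lambda_G$ with \eqref{eqt8} $= \sum_G \lambda_G D(G)$. Expanding the right side by Theorem \ref{t3} and comparing the coefficient of each fixed stabilizable semistable graph $H$ — which is possible because the semistable graphs form a linearly independent set, being distinct monomials in the $\{g_{i\bar j\,\alpha}\}$ up to the $g_T$ identification — yields $c(H)\,(-1)^{|V(H)|}/|{\rm Aut}(H)| = \lambda_G\,(-1)^{|V(G)|}|{\rm Aut}(G)|\,(-1)^{|V(H)|}/|{\rm Aut}(H)|$ for the unique $G$ with $H \in \mathcal S(G)$, i.e. $c(H) = \lambda_G (-1)^{|V(G)|}|{\rm Aut}(G)|$, which depends only on $G$. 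Hence $c(H_1)=c(H_2)$ whenever they have the same stabilization graph.

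The main point requiring care — and the step I would treat most carefully — is the justification that the $D(G)$ form a basis: linear independence follows from evaluation at the center of a normal coordinate system (where $D(G)$ reduces to $G$, the stable graphs being linearly independent as polynomials in the $\{g_{i\bar j\,\alpha}\}$), and spanning follows from Theorem \ref{t8}, which shows every canonical invariant polynomial attached to a stable graph is an honest invariant and conversely every Weyl invariant is such a polynomial. Once this is in place, the corollary is pure bookkeeping with Theorem \ref{t3}; the only mild subtlety is making sure the coefficient comparison is legitimate, i.e. that distinct stabilizable semistable graphs are genuinely linearly independent elements, so that matching coefficients of $H$ on both sides is valid.
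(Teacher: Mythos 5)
Your proof is correct and follows essentially the same route as the paper: the paper's own two-line argument likewise observes that \eqref{eqt8} is invariant iff it equals $\sum_{G\in\mathcal G(k)}c(G)\frac{(-1)^{|V(G)|}}{|{\rm Aut}(G)|}D(G)$ and then invokes Theorem \ref{t3}, which is exactly your regrouping-by-stabilization-graph plus coefficient comparison, spelled out in more detail. The only caveat is that the spanning of the Weyl invariants by the $D(G)$ rests on Weyl's classical invariant theory rather than on Theorems \ref{t8} and \ref{t3} themselves, but the paper's proof is equally implicit on this point.
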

\begin{proof}
Note that \eqref{eqt8} is a Weyl invariant if and only if it is equal to
$$\sum_{G\in \mathcal G(k)}c(G)\frac{(-1)^{|V(G)|}}{|{\rm Aut(G)}|}D(G).$$
So the corollary follows from Theorem \ref{t3}.
\end{proof}

\begin{definition}
For convenience, a function $c(H)$ defined on the set of stabilizable semistable graphs is called a \emph{Weyl function}
if it satisfies $c(H_1)=c(H_2)$ whenever $H_1,H_2$ have the same stabilization graph.
\end{definition}

Any constant function is a Weyl function. Below is a more nontrivial example.
\begin{lemma} \label{t11}
Let $\mathscr L(H)$ be the set of linear subgraphs of $H$ (note $\emptyset\in\mathscr L(H)$) and $p(L)$ the number of components of $L\in\mathscr L(H)$.
Then
\begin{equation}\label{eqC}
\beta_C(H)=\sum_{L\in\mathscr L(H)} C^{p(L)},
\end{equation}
is a Weyl function for any constant $C$.
\end{lemma}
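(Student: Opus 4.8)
The plan is to verify the defining property of a Weyl function directly: if $H_1$ and $H_2$ are stabilizable semistable graphs with the same stabilization graph $G$, then $\beta_C(H_1)=\beta_C(H_2)$. By induction on the number of edge-contractions and Lemma \ref{t2}, it suffices to treat the case where $H_2=H_1/e$ is obtained from $H_1$ by contracting a single contractible edge $e=uv$ (with $u\neq v$), and to show $\beta_C(H_1)=\beta_C(H_1/e)$. So the whole lemma reduces to a local invariance statement under a single contraction.

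For that local step, I would set up a bijection-with-correction between $\mathscr L(H_1)$ and $\mathscr L(H_1/e)$ that preserves the component count $p(L)$. Write $p$ for the vertex of $H_1/e$ obtained by merging $u$ and $v$. The key observation is that, since $e$ is contractible, at least one of $\deg^+(u)=1$ or $\deg^-(v)=1$ holds; say $\deg^+(u)=1$, so $e$ is the unique edge leaving $u$. A linear subgraph $L$ of $H_1$ is a disjoint union of directed cycles covering a subset of vertices; I want to compare those using $u$, those using $v$, and those using neither, with the linear subgraphs of $H_1/e$ using or not using $p$. The linear subgraphs of $H_1$ avoiding both $u$ and $v$ correspond exactly to those of $H_1/e$ avoiding $p$, with the same number of components. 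For the rest: a cycle through $u$ must use the edge $e$ (the unique out-edge of $u$), hence also passes through $v$; such cycles in $H_1$ that pass through both $u$ and $v$ consecutively via $e$ are in natural bijection with cycles in $H_1/e$ through $p$, again preserving $p(L)$ since contracting an edge inside a cycle leaves a cycle of one fewer edge on the same vertex set minus one point. The cases that need care are the linear subgraphs of $H_1$ containing a cycle through $v$ but not through $u$: such a cycle enters $v$ along some edge $\neq e$ and leaves $v$ along some edge; under contraction this becomes a cycle through $p$, and conversely a cycle through $p$ in $H_1/e$ either "uses the $e$-slot" (lifting to a cycle through both $u,v$ in $H_1$) or does not (lifting to a cycle through $v$ only in $H_1$). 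Matching these two preimages with the two classes of $L\in\mathscr L(H_1)$ gives the bijection; at every step the number of connected components is unchanged, so summing $C^{p(L)}$ over matched pairs yields equal totals.

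The main obstacle I anticipate is making the bijection in the preceding paragraph genuinely canonical rather than a case analysis that risks double-counting: one has to check that every linear subgraph of $H_1$ maps to a linear subgraph of $H_1/e$ (the contracted edge-set is still a disjoint union of cycles), that the map is surjective, and that the fibers are singletons — the last point relying on $\deg^+(u)=1$ so that a cycle through $p$ that "enters from the $u$-side" has a unique lift. A clean way to package this is to note that contraction of $e$ induces a bijection between $\{$subsets of $E(H_1)$ forming a disjoint union of cycles$\}$ and $\{$subsets of $E(H_1/e)$ forming a disjoint union of cycles$\}$, by the standard fact that edge contraction commutes with taking the cycle space over $\mathbb{F}_2$ when $e$ is not a loop; one then checks the component count is preserved because contracting a non-loop edge of a graph that is a disjoint union of cycles decreases the edge count and vertex count by one each within a single component. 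The symmetric case $\deg^-(v)=1$ is handled identically by reversing all arrows. Once the single-contraction case is established, Lemma \ref{t2} (contractibility is preserved and independent of order) lets me iterate down to the stabilization graph, proving $\beta_C$ depends only on $G$, i.e. $\beta_C$ is a Weyl function.
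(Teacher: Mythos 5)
Your core argument is the same as the paper's: the map sending $L$ to $L$ if $e\notin L$ and to $L/\{e\}$ if $e\in L$ is a bijection $\mathscr L(H)\to\mathscr L(H/e)$ preserving the component count, and bijectivity rests on the contractibility of $e$ --- your case analysis with $\deg^+(u)=1$ is exactly the point the paper compresses into ``either $\deg^+u=0$ or $\deg^-v=0$'' after deleting $e$. One warning: drop the proposed $\mathbb F_2$ cycle-space repackaging, which is false for a general non-loop edge because linear subgraphs are vertex-disjoint unions of directed cycles, not cycle-space elements; for instance, for two vertices joined by a $2$-cycle with a loop at each vertex, contracting an edge of the $2$-cycle sends $5$ linear subgraphs to $4$. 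That edge is of course non-contractible, which shows it is your degree-based case analysis, and not any general contraction fact, that makes the bijection work.
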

\begin{proof}
Let $H'$ be a graph obtained by contracting a contractible edge $e=uv$ in $H$.
For any given $L\in\mathscr L(H)$, define $L'\in\mathscr L(H')$ by
$$L'=\begin{cases} L, & e\notin L
\\
L/\{e\}, & e\in L
\end{cases}$$
where $L/\{e\}$ is the graph obtained by contracting $e$ in $L$.
Since we have either $\deg^+ u=0$ or $\deg^- v=0$,
it is not difficult to see that $L\mapsto L'$ gives a one-to-one correspondence between $\mathscr L(H)$ and $\mathscr L(H')$. Moreover, $p(L)=p(L')$.
So we have
\begin{equation*}
\beta_C(H)=\sum_{L\in\mathscr L(H)} C^{p(L)}=\sum_{L'\in\mathscr L(H')} C^{p(L')}=\beta_C(H').
\end{equation*}
This implies $\beta_C(H_1)=\beta_C(H_2)$ whenever $H_1,H_2$ have the same stabilization graph.
\end{proof}

\begin{corollary} \label{t12}
(i) $\det(I-A(H))$ is a Weyl function, where $I$ is the identity matrix and $A(H)$ is the adjacency matrix of $H$.

(ii) $|\mathscr L(H)|$, the number of linear subgraphs of $H$, is a Weyl function.
\end{corollary}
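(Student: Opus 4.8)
The plan is to exhibit both quantities as specializations of the Weyl function $\beta_C$ from Lemma \ref{t11}, so that the corollary follows with no further invariance input. For part (ii) this is immediate: taking $C=1$ in \eqref{eqC} gives
$$
\beta_1(H)=\sum_{L\in\mathscr L(H)}1^{p(L)}=|\mathscr L(H)|,
$$
so $|\mathscr L(H)|$ is a Weyl function (it will also fall out of part (i)).

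For part (i) the crux is the identity, which is just the classical coefficient theorem for digraph determinants rewritten in the present notation,
$$
\det\bigl(I-A(H)\bigr)=\sum_{L\in\mathscr L(H)}(-1)^{p(L)}=\beta_{-1}(H).
$$
Granting this, Lemma \ref{t11} with $C=-1$ shows at once that $\det(I-A(H))$ is a Weyl function. To prove the identity I would begin from the principal-minor expansion
$$
\det\bigl(I-A(H)\bigr)=\sum_{S\subseteq V(H)}(-1)^{|S|}\det\bigl(A(H)|_S\bigr),
$$
reading the empty principal minor as $1$, and then expand $\det(A(H)|_S)=\sum_{\sigma\in\operatorname{Sym}(S)}\operatorname{sgn}(\sigma)\prod_{v\in S}A(H)_{v,\sigma(v)}$. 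Interpreting $A(H)_{vw}$ as the number of directed edges $v\to w$ (loops contributing to $A(H)_{vv}$), the product $\prod_{v\in S}A(H)_{v,\sigma(v)}$ counts precisely the linear subgraphs $L$ whose covered vertex set is $S$ and whose cycle decomposition realizes $\sigma$. Since a permutation of $S$ with $p$ cycles (counting fixed points, which correspond to chosen loops) has sign $(-1)^{|S|-p}$, the weight $(-1)^{|S|}\operatorname{sgn}(\sigma)$ equals $(-1)^{p}$; re-indexing the double sum by the resulting $L$, with $S=\emptyset$ accounting for the empty linear subgraph, yields $\sum_{L}(-1)^{p(L)}$, as claimed.

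I do not anticipate a genuine obstacle here: the substantive content is entirely contained in Lemma \ref{t11}, and what remains is the standard expansion of $\det(I-A)$ over permutations. The one point requiring attention is the bookkeeping with multi-edges and loops, so that each factor in $\prod_{v\in S}A(H)_{v,\sigma(v)}$ enumerates actual edges and the re-indexing counts distinct subgraphs rather than isomorphism types; a quick check against the smallest examples (a single loop, a pair of anti-parallel edges, a single non-cyclic edge) confirms that the signs come out right.
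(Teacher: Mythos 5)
Your proposal is correct and follows essentially the same route as the paper: both parts are obtained by specializing Lemma \ref{t11} at $C=-1$ and $C=1$ respectively, with part (i) resting on the Coefficient Theorem $\det(I-A(H))=\sum_{L\in\mathscr L(H)}(-1)^{p(L)}$. The only difference is that you supply a (correct) proof of that identity via the principal-minor and permutation expansion, whereas the paper simply cites it from spectral graph theory.
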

\begin{proof}
(i) follows by taking $C=-1$ in \eqref{eqC} and using the following Coefficient Theorem from spectral graph theory,
\begin{equation}\label{eqc}
\det(I-A(H))=\sum_{L\in\mathscr L(H)} (-1)^{p(L)}.
\end{equation}

(ii) follows by taking $C=1$ in \eqref{eqC}.
\end{proof}

We remark that $\det(I-A(H))$ appears as the coefficients of asymptotic expansions of the Bergman kernel \cite{Xu}.

\vskip 30pt
\section{Covariant differential operators} \label{secdiff}

Differential operators on K\"ahler manifolds can be encoded by digraphs with a distinguished vertex.
The results in previous sections can be extended to this setting almost verbatim.

\begin{definition}
A \emph{(one-)pointed tree} $T=(V\cup\{\bullet\})$ is
defined to be a decorated tree with a distinguished vertex labeled by $f$. A \emph{(one-)pointed graph} $\Gamma=(V\cup\{\bullet\},E)$ is
defined to be a digraph with a distinguished vertex labeled by $f$.
$T$ or $\Gamma$ is called \emph{semistable} (resp. \emph{stable}) if each ordinary vertex $v\in V$ is semistable
(resp. stable).
\end{definition}

\begin{definition}
A directed edge $uv$ of a semistable pointed tree or a semistable pointed graph is called \emph{contractible} if $u\neq v$ and at least one of the following two conditions holds:
(i) $u\in V$ and $\deg^+(u)=1$; (ii) $v\in V$ and $\deg^-(v)=1$.
\end{definition}

A semistable pointed tree $T$ is called \emph{contractible} if all of its edges are contractible. Note that
Lemma \ref{t1} still holds for pointed trees.

\begin{theorem} Let $k,m\geq0$. Then
\begin{equation}\label{eqt11}
D(f_{a_1\cdots a_k\bar b_1\cdots\bar b_m})=\sum_{T=(V\cup\{\bullet\})\in\mathscr T_f(a_1\cdots a_k | \bar b_1\cdots\bar b_m)}(-1)^{|V|} f_T,
\end{equation}
where $\mathscr T_f(a_1\cdots a_k | \bar b_1\cdots\bar b_m)$ the set of all contractible semistable pointed trees
with external legs in the set $\{a_1\cdots a_k, \bar b_1\cdots\bar b_m\}$ and $f_T$ is the Weyl invariant associated to the pointed tree $T$.
\end{theorem}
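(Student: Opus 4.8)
The plan is to run the proof of Theorem~\ref{t8} essentially verbatim, treating the distinguished vertex $\bullet$ as an extra vertex subject to no stability constraint and not counted by $|V|$. I would first reduce the statement to a transformation law. If a contractible semistable pointed tree $T$ has $|V|\ge 1$, then $T$ is connected with at least two vertices and hence has an edge; since $T$ is contractible this edge is contractible, so by the definition of contractibility for pointed trees it is incident to an ordinary vertex $v$ with $\deg^+(v)=1$ or $\deg^-(v)=1$. Together with the semistability of $v$, this forces the metric derivative $g_{i\bar j\,\alpha}$ encoded by $v$ to have $\alpha$ purely holomorphic or purely antiholomorphic of positive order, hence to vanish at the center of a K\"ahler normal coordinate system; so $f_T$ vanishes there too. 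The only tree with $|V|=0$ is the single vertex $\{\bullet\}$ carrying all the external legs, contributing exactly $f_{a_1\cdots a_k\bar b_1\cdots\bar b_m}$. As $D(f_{a_1\cdots a_k\bar b_1\cdots\bar b_m})$ is by definition the canonical covariant differential operator with this value at the center, it now suffices to prove that the right-hand side of \eqref{eqt11} transforms covariantly, i.e. tensorially in the indices $a_1,\dots,a_k,\bar b_1,\dots,\bar b_m$ under biholomorphic changes of coordinates.

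For the transformation law I would fix $x\mapsto\phi(x)$ and expand each $f_T(x)$ by the chain rule: the $f$-derivative at $\bullet$ and each metric derivative at an ordinary vertex expand, exactly as in \eqref{eqt2}, into sums over set-partitions of the incident external and internal indices (no block mixing barred with unbarred) of vertices decorated by the relevant derivatives of $\phi$ or $\bar\phi$; and, as in \eqref{eqt3}, the inverse metrics on the internal edges transform by $g^{r\bar s}(x)=g^{c\bar d}(\phi(x))(\partial_c\phi_r^{-1})(\overline{\partial_d\phi_s^{-1}})$ and absorb, through $\sum_r(\partial_c\phi_r^{-1})(\partial_r\phi_t)=\delta_{ct}$, every internal half-edge carrying no decoration or a single derivative. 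Thus the right-hand side of \eqref{eqt11} at $x$ becomes a sum of $\phi$-trees whose external legs carry (possibly several) derivatives of $\phi,\bar\phi$ and whose internal half-edges are either undecorated or carry a multiple derivative; the \emph{well-decorated} ones (every external leg $a_i$, resp. $\bar b_j$, carrying the single derivative $\partial_{a_i}\phi$, resp. $\partial_{\bar b_j}\bar\phi$, and no decorated internal half-edge) sum precisely to the tensorial transform of $\sum_T(-1)^{|V|}f_T$ at $\phi(x)$.

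It remains to show that every ill-decorated $\phi$-tree occurs with total coefficient $0$, and this is the heart of the argument. As in Theorem~\ref{t8} I would treat each ill-decorated vertex $v$ separately by ``freeing'' its decorated half-edges: each decorated internal half-edge or multiple-derivative external leg at $v$ may independently be kept at $v$ or split onto a new ordinary vertex joined to $v$ by a new edge, which is automatically contractible, so by Lemma~\ref{t1} the resulting trees remain contractible. When $\deg(v)\ge 3$ the $2^{c(v)}$ such $\phi$-trees have $|V|$ varying by one from term to term and cancel; when $\deg(v)=2$ the two options again differ in $|V|$ by one and cancel. The point where care is needed — and the only genuine departure from Section~\ref{secpd} — is that this must be verified also when $v=\bullet$: the pointed vertex is exempt from semistability, is not counted by $|V|$, and the newly created vertices are ordinary, so the relaxed hypothesis $k,m\ge 0$ causes no trouble and the signs $(-1)^{|V|}$ still annihilate each ill-decorated configuration, provided one checks that each such configuration is produced exactly once by the freeing procedure. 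Granting this, all ill-decorated contributions cancel, the right-hand side of \eqref{eqt11} transforms tensorially, and by the first paragraph it coincides with $D(f_{a_1\cdots a_k\bar b_1\cdots\bar b_m})$.
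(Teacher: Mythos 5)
Your proposal is correct and follows exactly the route the paper intends: the paper's own proof of this theorem is simply ``similar to Theorem~\ref{t8},'' and you have carried out that adaptation faithfully — reducing to the transformation law via the vanishing of $f_T$ at the center of normal coordinates for $|V|\geq 1$, then cancelling ill-decorated $\phi$-trees by the freeing procedure, with the right bookkeeping for the sign $(-1)^{|V|}$ since the new vertices created by freeing are always ordinary. The one point you flag as needing care (freeing at the distinguished vertex $\bullet$, which is exempt from semistability) is indeed the only genuine difference from Section~\ref{secpd}, and your treatment of it is correct.
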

\begin{proof}
The proof is similar to Theorem \ref{t8}.
\end{proof}

\begin{definition}
The weight of a pointed graph $\Gamma=(V\cup\{\bullet\},E)$ is defined to be $w(\Gamma)=|E|-|V|$.
By abuse of notation, we denote $V(\Gamma)=V\cup\{\bullet\}$.
The set of semistable and stable pointed graphs of weight $k$ will
be denoted by $\mathcal G^{ss}_1(k)$ and $\mathcal G_1(k)$ respectively.
 We denote
by ${\rm Aut}(\Gamma)$ the set of all automorphisms of the pointed graph
$\Gamma$ fixing the distinguished vertex.
\end{definition}

A semistable pointed graph $\Gamma$ is called \emph{stabilizable} if after contractions of a finite number of contractible edges of
$\Gamma$, the resulting graph becomes stable, which is called the \emph{stabilization graph} of $\Gamma$ and denoted by $\Gamma^s$. Note that
Lemma \ref{t2} still holds for pointed graphs.

\begin{lemma}\label{t7}
(i) A strong semistable pointed graph $\Gamma$ is stabilizable.

(ii) Let $\Gamma$ be a stabilizable semistable graph. If the stabilization graph of $\Gamma$ is strong, then $\Gamma$ is also strong.
\end{lemma}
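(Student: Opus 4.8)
The plan is to transport the proofs of Lemmas~\ref{t5} and~\ref{t6} to the pointed setting; the only genuinely new feature is that the distinguished vertex $\bullet$ is exempt from the stability requirement and that, in the pointed notion of contractibility, the relevant endpoint of a contractible edge must lie in $V$. Before starting I would record the routine preliminaries: contracting a contractible edge of a semistable pointed graph again produces a semistable pointed graph (the pointed analogue of Lemmas~\ref{t1} and~\ref{t2}, already noted in the text; when one endpoint is $\bullet$ the merged vertex is again $\bullet$, otherwise it is an ordinary vertex of in- and out-degree at least $1$), and that edge contraction preserves strong connectivity and weak connectivity.

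For (i), let $v\in V$ be a nonstable ordinary vertex of the strong semistable pointed graph $\Gamma$. Semistability gives $\deg^-(v),\deg^+(v)\geq 1$ and $\deg^-(v)+\deg^+(v)\geq 3$, so nonstability forces $\deg^-(v)=1$ or $\deg^+(v)=1$. If $\deg^-(v)=1$, the unique inward edge $e=uv$ cannot be a loop, for otherwise no edge enters $v$ from another vertex, which is impossible in a strong graph on at least two vertices (here $v$ and $\bullet$ are distinct); hence $u\neq v$, and since $v\in V$ with $\deg^-(v)=1$ the edge $e$ is contractible. The case $\deg^+(v)=1$ is symmetric. Contracting $e$ keeps $\Gamma$ strong and semistable and strictly decreases $|V|$, so after finitely many contractions no nonstable ordinary vertex remains, i.e.\ $\Gamma$ has become stable; thus $\Gamma$ is stabilizable.

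For (ii), $\Gamma$ is weakly connected because $\Gamma^s$ is. Suppose $\Gamma$ were not strong. The key point, exactly as in Lemma~\ref{t6}, is that a semistable pointed graph with exactly two strongly connected components $A$ and $B$ has no contractible edge joining $A$ and $B$: all such edges point, say, from the source component $A$ to the sink component $B$, and for such an edge $e=uv$ one checks $\deg^+(u)\geq 2$ (if $|A|\geq 2$ then $u$ sits on a cycle inside $A$; if $A=\{u\}$ then $u$ receives no edge from $B$, so semistability forces a loop at $u$, whence $\deg^+(u)\geq 2$ together with $e$) and symmetrically $\deg^-(v)\geq 2$, so neither contractibility condition can hold; and if $\bullet\in\{u,v\}$ the pointed restriction only makes $e$ less contractible. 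Granting this, along any sequence of edge contractions the number of strongly connected components never drops below $2$: contracting an edge inside a component leaves it unchanged; contracting an edge between two distinct components merges exactly those two (a short reachability argument in the condensation DAG shows nothing further is absorbed), lowering the count by exactly $1$; and once the count equals $2$ no cross-edge is contractible, so the count is stuck at $2$. Applying this to the stabilization sequence of $\Gamma$ shows $\Gamma^s$ has at least two strongly connected components, contradicting strongness of $\Gamma^s$; hence $\Gamma$ is strong. I expect the only delicate point to be the ``nothing further is absorbed'' claim together with the loop bookkeeping at singleton components, both of which reduce to elementary reachability in the condensation DAG; the distinguished vertex itself causes no difficulty beyond being kept out of the stability count.
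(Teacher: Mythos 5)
Your proposal is correct and follows the same route as the paper, which proves this lemma simply by observing that it is the pointed analogue of Lemmas \ref{t5} and \ref{t6}; you have written out the details that the paper leaves implicit. The one step you flag as delicate --- that contracting a cross edge merges \emph{exactly} the two strongly connected components it joins --- does hold, but only because the edge $e=uv$ is contractible: the condition $\deg^+(u)=1$ (resp.\ $\deg^-(v)=1$) forces every path leaving $u$ (resp.\ entering $v$) to pass through $e$, which together with acyclicity of the condensation rules out absorbing an intermediate component, whereas for an arbitrary cross edge the claim would be false.
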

\begin{proof}
The proof is similar to Lemma \ref{t5} and Lemma \ref{t6}.
\end{proof}

\begin{theorem}\label{t9}
Let $\Gamma$ be a stable pointed graph of weight $k$. Then
\begin{equation}\label{eqt12}
D(\Gamma)=\sum_{Z}\frac{(-1)^{|V(Z)|-|V(\Gamma)|}|{\rm Aut(\Gamma)}|}{|{\rm Aut(Z)}|}  Z,
\end{equation}
where $Z$ runs over stabilizable semistable pointed graphs of weight $k$ whose stabilization graph is $\Gamma$.
\end{theorem}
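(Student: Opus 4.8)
The plan is to adapt the proof of Theorem \ref{t3} verbatim, with the distinguished vertex handled by the pointed tree expansion \eqref{eqt11} in place of \eqref{eqt1}. First I would expand $D(\Gamma)$: since $D(\Gamma)$ is, at the center of a normal coordinate system, the product of the curvature contractions attached to the ordinary vertices with the one attached to $f$, we may replace each ordinary vertex of $\Gamma$ by the signed sum of contractible semistable decorated trees given by \eqref{eqt1} (Theorem \ref{t8}) and the distinguished vertex by the signed sum of contractible semistable pointed trees given by \eqref{eqt11}, gluing these trees along the half-edges dictated by the incidence relations of $\Gamma$. This writes $D(\Gamma)=\sum_Z \varepsilon(Z)\,n(Z)\,Z$, a linear combination of pointed graphs $Z$, where $\varepsilon(Z)$ is the product of the signs from \eqref{eqt1} and \eqref{eqt11} and $n(Z)$ is a multiplicity.

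Next I would identify which $Z$ occur. Every vertex of an inserted tree is semistable, so each $Z$ is a semistable pointed graph; every edge of $Z$ lying inside one of the inserted trees is contractible, and contracting all of them simultaneously returns $\Gamma$, which is stable, so by Lemma \ref{t2} (valid for pointed graphs) stabilizability is order-independent and $Z^s=\Gamma$, $w(Z)=w(\Gamma)=k$. Conversely, given any stabilizable semistable pointed graph $Z$ with $Z^s=\Gamma$, the contraction map $Z\to\Gamma$ partitions $V(Z)$ into fibers over the vertices of $\Gamma$, and the induced sub-digraph on each fiber, with the edges leaving the fiber recorded as external legs, is a contractible semistable decorated tree — a pointed one over $\bullet$, since the fiber of $\bullet$ maps to $\bullet$ — hence appears in the corresponding expansion. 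So the $Z$ that occur are exactly those in the statement. Finally the sign telescopes: the exponent is $\sum_{v}(|V(T_v)|+1)+(|V(T_\bullet)|-1)=|V(Z)|+(|V(\Gamma)|-1)-1$, so $\varepsilon(Z)=(-1)^{|V(Z)|-|V(\Gamma)|}$.

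For the multiplicity I would run the orbit–stabilizer argument of Theorem \ref{t3} in the pointed category: ${\rm Aut}(\Gamma)$, the group of automorphisms of $\Gamma$ fixing $\bullet$, acts on the multiset of glued pointed graphs; two glued graphs are isomorphic as pointed graphs exactly when they lie in the same orbit, and the isotropy group at $Z$ is ${\rm Aut}(Z)$ (automorphisms of $Z$ fixing $\bullet$), so the orbit of $Z$ has $|{\rm Aut}(\Gamma)|/|{\rm Aut}(Z)|$ elements and $n(Z)=|{\rm Aut}(\Gamma)|/|{\rm Aut}(Z)|$, which yields \eqref{eqt12}. The only place the pointed setting differs from Theorem \ref{t3} is bookkeeping: the contraction $Z\to\Gamma$ and all automorphisms in sight must respect $\bullet$, and the inserted tree over $\bullet$ must be taken pointed. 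I expect the sole (minor) obstacle to be checking carefully that the fibers of $Z\to\Gamma$ are trees whose induced decorations make them contractible, which holds because $Z$ is obtained from $\Gamma$ by a sequence of inverse edge-contractions, each splitting a single vertex.
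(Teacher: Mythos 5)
Your proposal is correct and follows exactly the route the paper intends: the paper's proof of Theorem \ref{t9} is literally ``similar to Theorem \ref{t3}'', i.e.\ expand each ordinary vertex via \eqref{eqt1} and the distinguished vertex via \eqref{eqt11}, then run the orbit--stabilizer count for ${\rm Aut}(\Gamma)$ acting on the resulting multiset of glued pointed graphs. Your sign bookkeeping $(-1)^{|V(Z)|+|V(\Gamma)|-2}=(-1)^{|V(Z)|-|V(\Gamma)|}$ and the identification of the fibers of $Z\to\Gamma$ as contractible semistable (pointed) trees are both right, so nothing is missing.
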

\begin{proof}
The proof is similar to Theorem \ref{t3}.
\end{proof}

\begin{corollary}\label{t10}
A linear combination of stabilizable semistable pointed graphs
\begin{equation}\label{eqt13}
\sum_{Z\in \mathcal G^{ss}_1}^{\text{stabilizable}}c(Z)\frac{(-1)^{|V(Z)|}}{|{\rm Aut(Z)}|}Z
\end{equation}
is a covariant differential operator (i.e. invariant under coordinate transformations) if and only if
$c(Z_1)=c(Z_2)$ whenever $Z_1,Z_2$ have the same stabilization graph.
\end{corollary}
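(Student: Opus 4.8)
The plan is to mirror the proof of Corollary~\ref{t4}, with Theorem~\ref{t9} in the role of Theorem~\ref{t3}, the tree expansion \eqref{eqt11} in the role of \eqref{eqt1}, and Lemma~\ref{t7} in the role of Lemmas~\ref{t5} and~\ref{t6}. Since stabilization preserves the weight ($w(Z^s)=w(Z)$), the sum in \eqref{eqt13} is to be read weight by weight, with finitely many terms in each weight $k$ --- namely the stabilizable semistable pointed graphs of weight $k$ --- so fix $k$. The pointed analogue of H.~Weyl's invariant theory, recorded in the Introduction as the statement that the $D(\Gamma)$ attached to stable pointed graphs $\Gamma$ form a canonical basis of the algebra $\mathscr R$ of covariant differential operators, shows that \eqref{eqt13} is a covariant differential operator if and only if it lies in the span of $\{D(\Gamma):\Gamma\in\mathcal G_1(k)\}$, i.e. if and only if
$$
\sum_{Z}c(Z)\frac{(-1)^{|V(Z)|}}{|{\rm Aut}(Z)|}\,Z
=\sum_{\Gamma\in\mathcal G_1(k)}c'(\Gamma)\,\frac{(-1)^{|V(\Gamma)|}}{|{\rm Aut}(\Gamma)|}\,D(\Gamma)
$$
for some function $c'$ on stable pointed graphs of weight $k$.

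Next I would expand the right-hand side by \eqref{eqt12}. Substituting $D(\Gamma)=\sum_{Z:\,Z^s=\Gamma}\frac{(-1)^{|V(Z)|-|V(\Gamma)|}|{\rm Aut}(\Gamma)|}{|{\rm Aut}(Z)|}\,Z$, the factors $|{\rm Aut}(\Gamma)|$ cancel and the signs combine, giving
$$
\sum_{\Gamma\in\mathcal G_1(k)}c'(\Gamma)\,\frac{(-1)^{|V(\Gamma)|}}{|{\rm Aut}(\Gamma)|}\,D(\Gamma)
=\sum_{Z}c'(Z^s)\,\frac{(-1)^{|V(Z)|}}{|{\rm Aut}(Z)|}\,Z,
$$
the sum again over stabilizable semistable pointed graphs $Z$ of weight $k$. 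Since the Weyl polynomials attached to distinct isomorphism classes of semistable pointed graphs are linearly independent --- a semistable pointed graph is recovered, up to isomorphism, from the monomial-and-contraction structure of its polynomial --- comparing the coefficient of each $Z$ yields $c(Z)=c'(Z^s)$ for all $Z$. Hence $c(Z_1)=c(Z_2)$ whenever $Z_1^s=Z_2^s$, i.e. $c$ is a Weyl function. Conversely, if $c$ has this property, then $c'(\Gamma):=c(\Gamma)$ for stable $\Gamma$ makes the first display hold, so \eqref{eqt13} equals $\sum_{\Gamma}c(\Gamma)\frac{(-1)^{|V(\Gamma)|}}{|{\rm Aut}(\Gamma)|}D(\Gamma)$, which is a covariant differential operator. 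This establishes both directions.

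The combinatorial heart of the argument --- the cancellation of automorphism factors when each stable vertex is expanded into contractible semistable pointed trees --- is already packaged in Theorem~\ref{t9}, whose proof runs parallel to that of Theorem~\ref{t3}, while Lemma~\ref{t7} supplies the fact that stabilizability and strongness behave in the pointed setting exactly as in the unpointed one, so no new bookkeeping is required. The point that genuinely requires care, exactly as in Corollary~\ref{t4}, is the pair of inputs used to set up the first display: that every covariant differential operator of weight $k$ lies in the span of the $D(\Gamma)$ with $\Gamma$ stable (the pointed version of classical invariant theory for holomorphic changes of coordinates), and the linear independence of the polynomials attached to semistable pointed graphs, which is what makes the coefficient comparison legitimate. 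Once these are in hand the proof is a verbatim transcription of that of Corollary~\ref{t4}, carrying the distinguished vertex $\bullet$ along and taking ${\rm Aut}$ to fix it.
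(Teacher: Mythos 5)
Your proposal is correct and follows essentially the same route as the paper: the paper's proof of Corollary~\ref{t10} simply invokes Theorem~\ref{t9} in exactly the way you do (mirroring how Corollary~\ref{t4} is deduced from Theorem~\ref{t3}, by rewriting the putative invariant as $\sum_{\Gamma}c'(\Gamma)\frac{(-1)^{|V(\Gamma)|}}{|{\rm Aut}(\Gamma)|}D(\Gamma)$ and comparing coefficients). You merely make explicit the two inputs the paper leaves tacit --- that covariant differential operators are spanned by the $D(\Gamma)$ for stable pointed $\Gamma$, and the linear independence of the polynomials attached to distinct semistable pointed graphs.
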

\begin{proof}
It follows immediately from Theorem \ref{t9}.
\end{proof}

\begin{example}
Engli\v s \cite{Eng} proved the following asymptotic expansion for a Laplace integral
on a domain $\Omega\in\mathbb C^n$ when $ m\rightarrow\infty$,
\begin{equation}
\int_{\Omega} f(y)e^{-m(\Phi(x, x) + \Phi(y, y)-\Phi(x, y)-\Phi(y, x))}\frac{\omega_g^n(y)}{n!} =
\frac{1}{m^n}\sum_{k\geq0}m^{-k}R_k(f)(x),
\end{equation}
where $\Phi$ is the K\"ahler potential and $R_k$ are
covariant differential operators.

In \cite[Thm 3.2]{Xu2}, we proved an explicit formula for $R_k$,
\begin{equation}\label{eqt22}
R_k(f)=\sum_{\Gamma\in \mathcal G^{ss}_1} \frac{\det(A(\Gamma_-)-I)}{|{\rm Aut}(\Gamma)|}\,\Gamma,
\end{equation} 
where $\Gamma_-$ is obtained by removing
the distinguished vertex of $\Gamma$. 

We show that \eqref{eqt22} is consistent with Corollary \ref{t10}. Similar to Corollary \ref{t12} (i), we have
$\det(I-A(\Gamma_-))=\det(I-A(\Gamma'_-))$ where $\Gamma'$ is obtained by contracting a contractible edge in $\Gamma$.
Moreover, if $\Gamma$ is a semistable pointed graph which is non-stabilizable, then $\det(I-A(\Gamma_-))=0$. In order to prove the last assertion,
we may assume that each edge of $\Gamma$ is non-contractible. If $v$ is a strictly semistable ordinary vertex (i.e. $\deg(v)=3$), then $v$ must have
a self-loop, namely $\Gamma_-$ contains a SCC $\{\xymatrix{*+[o][F-]{1}}\}$. Therefore we must have $\det(I-A(\Gamma_-))=0$.
\end{example}

By Corollary \ref{t10}, the graded algebra $\mathscr R$ of abstract covariant differential operators has a canonical basis $\mathcal G_1$ consisting of
stable pointed graphs, graded by weights.
Before we give a multiplication formula in this algebra, we need more definitions.
\begin{definition} \label{dfGS}
Let $\Gamma=(V\cup\{\bullet\},E)$ be a pointed graph that can be
obtained by inserting a finite number of vertices to edges of a semistable pointed graph $\Gamma^{ss}$, called the
semistabilization graph of $\Gamma$. Such $\Gamma$ is called \emph{generalized stabilizable} (GS) if
$\Gamma^{ss}$ is stabilizable. The stabilization graph of $\Gamma^{ss}$, denoted by $\Gamma^s$, is also called the stabilization graph of $\Gamma$.
\end{definition}

The reason we introduce GS pointed graphs is to account for the derivatives on edges $g^{i\bar j}$.
See \cite[Rem. 3.7]{Xu2} for detailed discussions.

We have the following explicit composition formula of covariant differential operators.
\begin{theorem}\label{comp} In terms of the basis of stable pointed graphs, we have
\begin{multline}\label{eqt14}
\left(\sum_{Z_1\in \mathcal G_1}c_1(Z_1)\frac{(-1)^{|V(Z_1)|}}{|{\rm Aut(Z_1)}|}Z_1\right) \circ
\left(\sum_{Z_2\in \mathcal G_1}c_2(Z_2)\frac{(-1)^{|V(Z_2)|}}{|{\rm Aut(Z_2)}|}Z_2\right)\\
=\sum_{Z\in \mathcal G_1}\left(\sum_{\Gamma\subset Z}^{\text{GS}}(-1)^{|V((Z/\Gamma)^s)|+|V(\Gamma^s)|}
c_1((Z/\Gamma)^s)c_2(\Gamma^s)\right)\frac{1}{|{\rm Aut(Z)}|}Z,
\end{multline}
where $\Gamma$ runs over all GS pointed subgraph of $Z$, and $Z/\Gamma$ is the pointed graph
obtained from $Z$ by contracting $\Gamma$ to a point.
\end{theorem}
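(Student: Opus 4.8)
The plan is to reduce \eqref{eqt14} to the case of single basis elements and then to evaluate both sides at the centre of a K\"ahler normal coordinate system. Both sides are $\mathbb C$-bilinear in $c_1,c_2$, and by Corollary \ref{t10} (using Theorem \ref{t9} to collect the semistable expansion of each $D(G)$ by stabilization type) the formal sum $\sum_{G\in\mathcal G_1}c(G)\frac{(-1)^{|V(G)|}}{|{\rm Aut}(G)|}G$ denotes the covariant differential operator $\sum_{G\in\mathcal G_1}c(G)\frac{(-1)^{|V(G)|}}{|{\rm Aut}(G)|}D(G)$; hence $\{D(G)\}_{G\in\mathcal G_1}$ is a basis of $\mathscr R$ and it suffices to prove \eqref{eqt14} when $c_i=\delta_{G_i}$ for stable pointed graphs $G_1,G_2$. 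Unwinding the normalizations (the overall sign $(-1)^{|V(G_1)|+|V(G_2)|}$ cancels), this becomes the identity
\[
D(G_1)\circ D(G_2)=|{\rm Aut}(G_1)|\,|{\rm Aut}(G_2)|\sum_{Z\in\mathcal G_1}\frac{N(Z;G_1,G_2)}{|{\rm Aut}(Z)|}\,D(Z),
\]
where $N(Z;G_1,G_2)$ is the number of GS pointed subgraphs $\Gamma\subseteq Z$ with $\Gamma^s\cong G_2$ and $(Z/\Gamma)^s\cong G_1$. Since two elements of $\mathscr R$ agree once they produce the same function when applied to an arbitrary $f$, and that function is polynomial in the jets of $f$ and of the metric, it is enough to verify the displayed identity after applying both sides to a fixed $f$ and evaluating at the centre $x_0$ of a K\"ahler normal coordinate system.

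To compute the left side, I would first reduce the outer operator to a naive contraction. Theorem \ref{t8} applied at each vertex (equivalently Theorem \ref{t9}) yields the jet-level identity $D(W)=g_W+(\text{terms each carrying a vertex of inward or outward degree }1)$ for every stable pointed graph $W$, and a vertex of inward or outward degree $1$ represents a purely holomorphic or purely anti-holomorphic derivative of $g_{i\bar j}$ of positive order, which vanishes at $x_0$. Hence $D(W)[\psi]|_{x_0}=g_W[\psi]|_{x_0}$ for every $\psi$, and with $W=G_1$, $\psi=D(G_2)[f]$,
\[
\big(D(G_1)\circ D(G_2)\big)[f]\big|_{x_0}=g_{G_1}\!\big[D(G_2)[f]\big]\big|_{x_0},
\]
which is the contraction, along the edges of $G_1$, of the $\delta$'s, of the values $g_{i\bar j\alpha}|_{x_0}$ at the ordinary vertices, and of $\partial^{\alpha_0}\big(D(G_2)[f]\big)|_{x_0}$ at the distinguished vertex of $G_1$, where $\alpha_0$ ranges over its half-edges. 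It remains to expand $\partial^{\alpha_0}\big(D(G_2)[f]\big)|_{x_0}$. Differentiating the polynomial $D(G_2)[f]=\sum_{H_2:\,H_2^s=G_2}(-1)^{|V(H_2)|-|V(G_2)|}\frac{|{\rm Aut}(G_2)|}{|{\rm Aut}(H_2)|}\,g_{H_2}[f]$ by the Leibniz rule, each index of $\alpha_0$ lands on a metric-derivative factor at an ordinary vertex, on an $f$-derivative at the distinguished vertex, or on a factor $g^{i\bar j}$ on an edge; in the last case, iterating $\partial_k g^{i\bar j}=-g^{i\bar p}g_{p\bar q k}g^{q\bar j}$ splices new metric-derivative vertices into that edge — precisely the mechanism Definition \ref{dfGS} and \cite[Rem. 3.7]{Xu2} are built to record.

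Evaluating at $x_0$, a spliced-in vertex survives only if it carries at least one holomorphic and at least one anti-holomorphic derivative-direction, in which case it is automatically (strongly) stable; the other splittings produce purely (anti)holomorphic derivatives of a metric component and vanish. This vanishing, together with that of the inward/outward-degree-$1$ vertices above, is the analogue here of the cancellation of ill-decorated $\phi$-trees in the proof of Theorem \ref{t8}, and it forces every surviving term into the shape $g_Z[f]|_{x_0}$ with $Z$ a \emph{stable} pointed graph built as follows: take $G_2$, subdivide some of its edges by the (stable) spliced vertices — so that the result is a GS pointed subgraph $\Gamma\subseteq Z$ with $\Gamma^{ss}$ recording the subdivisions and $\Gamma^s=G_2$ — then attach the half-edges formerly at the distinguished vertex of $G_1$ to the vertices of $\Gamma$ and reattach the rest of $G_1$ (an edge of $Z$ between two vertices of $\Gamma$ that is not part of $\Gamma$ becomes a loop at the distinguished vertex of $Z/\Gamma$ after contraction), so that $Z/\Gamma=G_1$. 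Conversely, every pair $(Z,\Gamma)$ with $Z\in\mathcal G_1$ and $\Gamma$ a GS pointed subgraph satisfying $\Gamma^s\cong G_2$ and $(Z/\Gamma)^s\cong G_1$ arises this way; thus the surviving terms are indexed exactly by such pairs.

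What remains is to match signs and multiplicities, and this is where I expect the real work to be. Expanding each $D(G_i)$ into naive-contraction graphs as above, performing the Leibniz expansion, and regrouping the resulting multiset of graphs by isomorphism type, the group ${\rm Aut}(G_1)\times{\rm Aut}(G_2)$ acts with the isotropy of a configuration producing $(Z,\Gamma)$ equal to ${\rm Aut}(Z)$ — giving the factor $|{\rm Aut}(G_1)||{\rm Aut}(G_2)|/|{\rm Aut}(Z)|$ — while the two families of signs $(-1)^{|V(H_i)|-|V(G_i)|}$, once the contributions of the spliced vertices are accounted for, telescope to the single global sign $(-1)^{|V(G_1)|+|V(G_2)|}$. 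The delicate points are exactly these: that the cancellation of the non-GS terms is complete and uniform; that the isomorphism count is not distorted when $Z$ has automorphisms mixing $\Gamma$ with $Z/\Gamma$ or permuting the spliced vertices; and that the sign bookkeeping through the double $D$-expansion is carried out correctly. Each is handled by transcribing the automorphism and sign arguments of Theorems \ref{t3} and \ref{t9} to the present setting, with Definition \ref{dfGS} keeping the splicings under control.
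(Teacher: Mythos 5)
Your overall route is legitimate and genuinely different from the paper's: the paper disposes of Theorem \ref{comp} by combining Corollary \ref{t10} with the combinatorial composition lemma of \cite[Lem.~3.10 \& Rem.~3.7]{Xu3}, i.e.\ it works with the \emph{full semistable} expansions in an arbitrary coordinate system and then invokes the Weyl-function criterion to identify the result, whereas you evaluate only the stable part at the centre of a normal coordinate system. Your reduction to $c_i=\delta_{G_i}$, the identity $D(W)[\psi]|_{x_0}=g_W[\psi]|_{x_0}$, the Leibniz expansion with edge splicing via $\partial_k g^{i\bar j}=-g^{i\bar p}g_{p\bar qk}g^{q\bar j}$, and the bijection between surviving configurations and pairs $(Z,\Gamma)$ with $\Gamma$ a GS pointed subgraph, $\Gamma^s\cong G_2$, $(Z/\Gamma)^s\cong G_1$, are all sound.

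The genuine gap is exactly where you place "the real work": the sign bookkeeping, and your one concrete assertion about it is wrong. A configuration contributing to the pair $(Z,\Gamma)$ carries the sign $(-1)^{|V(H_2)|-|V(G_2)|}$ from the Theorem~\ref{t9} expansion of $D(G_2)$ times $(-1)^{s}$ from the $s$ spliced vertices, i.e.\ $(-1)^{|V(\Gamma)|-|V(\Gamma^s)|}$. Since every contributing $\Gamma\subset Z$ has $|V(\Gamma)|=|V(Z)|-|V(G_1)|+1$, this is a \emph{$Z$-dependent} residual sign, not the single global sign $(-1)^{|V(G_1)|+|V(G_2)|}$ you claim the expansion telescopes to; the two differ by $(-1)^{|V(\Gamma)|-|V(\Gamma^s)|}$, which is $-1$ whenever $\Gamma$ properly refines $G_2$ by an odd number of vertices. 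The discrepancy is visible already for $G_1=G_2=$ the one-loop pointed graph (so $D(G_i)=g^{i\bar j}f_{i\bar j}$): at the centre of normal coordinates the composition equals $g^{i\bar j}g^{k\bar l}f_{i\bar jk\bar l}-g^{i\bar j}g^{k\bar q}g^{p\bar l}g_{p\bar qi\bar j}f_{k\bar l}$, and the second graph (one ordinary vertex with a loop, joined to $\bullet$ by two opposite edges) has a unique contributing $\Gamma$ with $|V(\Gamma)|-|V(\Gamma^s)|=1$; its coefficient is $-1$, whereas your telescoping claim forces $+1$. Reconciling this residual sign with the exponent $|V((Z/\Gamma)^s)|+|V(\Gamma^s)|$ appearing in \eqref{eqt14} (rather than $|V(Z/\Gamma)|+|V(\Gamma)|$) is precisely the non-trivial content of the theorem, and your proposal does not carry it out; the appeal to "transcribing the arguments of Theorems \ref{t3} and \ref{t9}" does not supply it, because those arguments never compose two expansions or track the extra $(-1)^s$ from differentiated edges. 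The multiplicity count (isotropy equal to ${\rm Aut}(Z)$ under the ${\rm Aut}(G_1)\times{\rm Aut}(G_2)$ action) is likewise asserted rather than proved, but the sign issue is the concrete failure point.
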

\begin{proof}
Equation \eqref{eqt14} follows almost immediately from
Corollary \ref{t10} and results proved in our previous paper \cite[Lem. 3.10 \& Rem. 3.7]{Xu3}.
\end{proof}

A further justification to \eqref{eqt14} is
the following lemma.
\begin{lemma}
Let $Z$ be a GS pointed graph and $\Gamma$ a GS pointed subgraph of $Z$. Then $Z/\Gamma$ is also a GS pointed graph.
\end{lemma}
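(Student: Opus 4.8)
The plan is to unwind the two levels of the definition of ``generalized stabilizable'' (GS) and reduce the assertion to the already-established behavior of stabilization under edge-contraction. Recall that a pointed graph $Z$ is GS if there is a semistabilization graph $Z^{ss}$ (obtained from $Z$ by deleting the vertices we inserted on edges, equivalently contracting all the ``subdivision'' edges) which is stabilizable in the sense of Section~\ref{secdiff}, and its stabilization $Z^s=(Z^{ss})^s$ is stable. So I need three things: (a) that contracting a GS subgraph $\Gamma$ to a point produces a pointed graph that again can be written as a subdivision of a semistable pointed graph; (b) that this semistable graph is stabilizable; and (c) that its stabilization is stable — but (c) is automatic once (b) holds, since stabilizability by definition means the stabilization is stable.

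First I would set up notation: let $Z^{ss}$ be a semistabilization of $Z$ with all inserted (valence-two, undecorated) vertices marked, and let $\Gamma^{ss}$ be the corresponding semistabilization of $\Gamma$, which by hypothesis is a stabilizable semistable pointed subgraph of $Z^{ss}$ (after possibly moving inserted vertices — here I would note that since $\Gamma$ is GS as an abstract pointed graph, its inserted vertices can be taken to be among those of $Z^{ss}$, or else lie in the interior of edges of $Z^{ss}$, and in the latter case we simply refine $Z^{ss}$ by those extra subdivision points; this refinement is still a semistabilization of $Z$). Now form $Z^{ss}/\Gamma^{ss}$: contracting the stabilizable subgraph $\Gamma^{ss}$ to a single vertex. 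The key claim is that $Z^{ss}/\Gamma^{ss}$ is semistable and that $Z/\Gamma$ is one of its subdivisions. Semistability of the contracted vertex is the crux and I would argue it exactly as in the proof of Lemma~\ref{t6}: contracting a whole stabilizable (hence, after enough contractions, stable) chunk cannot lower the in- or out-degree of the merged vertex below the stable bounds, because a stable graph has all vertices with $\deg^-\geq 2$, $\deg^+\geq 2$, and the external edges of $\Gamma^{ss}$ into the rest of $Z^{ss}$ only add to these; the remaining vertices of $Z^{ss}$ keep their degrees, hence stay semistable. Then $Z^{ss}/\Gamma^{ss}$ is semistable.

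Next, stabilizability of $Z^{ss}/\Gamma^{ss}$: I would stabilize $\Gamma^{ss}$ first inside $Z^{ss}$, using Lemma~\ref{t2} to see that contractions of edges internal to $\Gamma^{ss}$ do not interfere with contractibility of edges of $Z^{ss}$ outside $\Gamma^{ss}$, so after these we reach a semistable pointed graph $\widetilde Z$ containing the stable $\Gamma^s$ as a subgraph, with $\widetilde Z/\Gamma^s = Z^{ss}/\Gamma^{ss}$ (contracting a stable subgraph to a point). Since $Z$ itself is a stable pointed graph in the composition formula — or more generally since $Z^s$ is stable — continuing to contract the contractible edges of $\widetilde Z$ outside $\Gamma^s$ eventually reaches $Z^s$, which is stable; but the very same sequence of contractions, performed on $Z^{ss}/\Gamma^{ss}$ (which is $\widetilde Z$ with $\Gamma^s$ already collapsed to a point), terminates in $Z^s/(\text{point})=Z^s$ as well, a stable graph. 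Hence $Z^{ss}/\Gamma^{ss}$ is stabilizable with stabilization $Z^s$, so $Z/\Gamma$, being a subdivision of the stabilizable semistable graph $Z^{ss}/\Gamma^{ss}$, is GS with $(Z/\Gamma)^s=Z^s$ — consistent with the notation in Theorem~\ref{comp}.

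The main obstacle I anticipate is the bookkeeping around inserted (subdivision) vertices: making precise that ``$\Gamma$ is a GS pointed subgraph of $Z$'' gives compatible semistabilizations $\Gamma^{ss}\subset Z^{ss}$, and that the inserted vertices of $\Gamma$ which happen to sit in the interior of a $Z^{ss}$-edge can be absorbed by refining $Z^{ss}$ without destroying its being a semistabilization of $Z$ (a subdivided edge of a semistable graph is still, after recontracting, the same semistable graph — this is exactly the content of ``semistabilization''). Once that reduction is clean, the degree estimate for the contracted vertex and the Lemma~\ref{t2}/Lemma~\ref{t6}-style commutation of contractions are routine, so I would keep the written proof short: one paragraph fixing the semistabilizations, one paragraph on semistability of $Z^{ss}/\Gamma^{ss}$, one on its stabilizability, concluding $Z/\Gamma$ is GS.
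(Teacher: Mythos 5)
Your overall route --- pass to the semistabilizations, observe that collapsing $\Gamma$ does not change which edges of $Z^{ss}$ outside $\Gamma^{ss}$ are contractible, and transport a stabilizing sequence of contractions from $Z^{ss}$ to $(Z/\Gamma)^{ss}$ --- is exactly the paper's (one-line) argument, and that core is sound. But two specific steps are wrong as written. First, your justification of semistability of the contracted vertex fails: when $\Gamma^{ss}$ is collapsed to a point its \emph{internal} edges disappear, so the in- and out-degrees of the merged vertex are only the numbers of external half-edges of $\Gamma^{ss}$ in $Z^{ss}$, which can perfectly well be $0$ or $1$; the bounds $\deg^{\pm}\geq 2$ holding inside the stable graph $\Gamma^{s}$ do not ``add to'' anything, they are destroyed by the contraction. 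What actually saves the statement is a point you did not use: $\Gamma$ is a \emph{pointed} subgraph, hence contains the distinguished vertex, so the merged vertex is the distinguished vertex of $Z/\Gamma$, which is exempt from the (semi)stability requirement; all ordinary vertices outside $\Gamma$ keep their degrees and so stay semistable.

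Second, the concluding identification $(Z/\Gamma)^{s}=Z^{s}$ is false. The endpoint of your contraction sequence applied to $\widetilde Z/\Gamma^{s}$ is $Z^{s}$ with the (possibly enlarged) image of $\Gamma^{s}$ collapsed to the distinguished vertex, not $Z^{s}$ itself; the notation ``$Z^{s}/(\text{point})=Z^{s}$'' conflates the two. A weight count refutes it outright: $w(Z/\Gamma)=w(Z)-w(\Gamma)$ and stabilization preserves weight, so $(Z/\Gamma)^{s}$ and $Z^{s}$ differ whenever $w(\Gamma)\neq 0$. (If $(Z/\Gamma)^{s}$ were always $Z^{s}$, the coefficient $c_{1}((Z/\Gamma)^{s})$ in Theorem \ref{comp} would be constant in $\Gamma$ and the composition formula would degenerate.) The endpoint of your sequence is nonetheless a stable pointed graph, because every ordinary vertex outside the collapsed part retains its degrees at each step and the contractibility of each edge being contracted is unaffected by the collapse; so the stabilizability of $(Z/\Gamma)^{ss}$ --- which is all the lemma asserts --- does follow once these two points are repaired.
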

\begin{proof}
Let $e$ be an edge in $(Z/\Gamma)^{ss}$. Then it is not difficult to see that $e$ is contractible in $(Z/\Gamma)^{ss}$ if and only if
$e$ is contractible in $Z^{ss}$. Therefore $(Z/\Gamma)^{ss}$ is stabilizable, since $Z^{ss}$ is stabilizable.
\end{proof}

By Lemma \ref{t7}, each strong pointed graph is a GS pointed graph. All linear combinations of strong (stable) pointed graphs form a subalgebra $\mathscr S$,
which contains
certain interesting covariant differential operators arising from deformation quantization on K\"ahler manifolds (cf. Theorem \ref{thmstar}). The composition formula
in $\mathscr S$ is given by
\begin{multline}\label{eqt15}
\left(\sum_{Z_1\in \mathcal G_1}^{\text{strong}}c_1(Z_1)\frac{(-1)^{|V(Z_1)|}}{|{\rm Aut(Z_1)}|}Z_1\right) \circ
\left(\sum_{Z_2\in \mathcal G_1}^{\text{strong}}c_2(Z_2)\frac{(-1)^{|V(Z_2)|}}{|{\rm Aut(Z_2)}|}Z_2\right)\\
=\sum_{Z\in \mathcal G_1}^{\text{strong}}\left(\sum_{\Gamma\subset Z}^{\text{strong}}(-1)^{|V((Z/\Gamma)^s)|+|V(\Gamma^s)|}
c_1((Z/\Gamma)^s)c_2(\Gamma^s)\right)\frac{1}{|{\rm Aut(Z)}|}Z,
\end{multline}
where $\Gamma$ runs over all strong pointed subgraph of $Z$.

Recall that the Berezin transform has an asymptotic expansion (cf. \cite{Eng, KS}),
\begin{equation} \label{eqber}
I_\alpha f(x)=\sum^\infty_{k=0} Q_k f(x)\alpha^{-k},\quad \alpha\rightarrow\infty.
\end{equation}
The following explicit formula for the differential operators $Q_k$ was proved in \cite{Xu2},
\begin{equation}\label{eqQ}
Q_k=\sum_{\Gamma\in \mathcal G_1(k)}^{\text{strong}}\frac{\det(A(\Gamma_-)-I)}{|{\rm Aut(\Gamma)}|}\,\Gamma,
\end{equation}
where $\Gamma_-$ is obtained from $\Gamma$ by removing the distinguished vertex from
$\Gamma$.

We can also study $\mathscr R$ and $\mathscr S$ on a fixed K\"ahler manifold. For a bounded symmetric domain $\Omega$ of rank $r$ equipped
with the Bergman metric, it is
obvious that $\mathscr R=\mathscr S$. Denote by $\mathscr D(\Omega)$ the algebra of invariant differential operators. Engli\v s proved that
$\mathscr S$ coincides with $\mathscr D(\Omega)$ \cite[Prop. 7]{Eng} and $\mathscr S$ is freely generated by $Q_1,Q_3,Q_5,\dots,Q_{2r-1}$ \cite[Thm. 1.1]{Eng0}.

On a bounded symmetric domain, all $R_{i\bar j k\bar l/\alpha}=0,\,|\alpha|\geq1$. So a pointed graph $\Gamma=0$ unless $\Gamma$ is a balanced graph,
i.e. $\deg^+(v)=\deg^-(v)$ for each vertex $v$.
Combining Engli\v s' result and \eqref{eqQ}, we get a set of explicit generators for $\mathscr D(\Omega)$
in terms of balanced strong pointed graphs,
\begin{equation}\label{eqQ2}
Q_k=\sum_{\Gamma\in \mathcal G_1(k)}^{\text{balanced}\atop\text{strong}}\frac{\det(A(\Gamma_-)-I)}{|{\rm Aut(\Gamma)}|}\,\Gamma,\qquad k=1,3,\dots,2r-1,
\end{equation}
whose composition formula is given by \eqref{eqt15}.
Note that on a bounded symmetric domain, balanced strong pointed graphs in $\mathcal G_1(k)$ are not linearly independent. For $k=1,3,5$, $G_k$ has $1,5,119$ nonzero terms respectively.

\vskip 30pt
\section{Star products} \label{secstar}
On a Kahler manifold $(M,\omega_{-1})$, a formal deformation of the form $(1/\nu)\omega_{-1}$ is a formal
$(1,1)$-form,
\begin{equation}\label{eqomega}
  \hat\omega = \frac{1}{\nu} \omega_{-1} + \omega_0 + \nu\omega_1 + \nu^2\omega_2
  + \cdots,
\end{equation}
where each $\omega_k$ is a closed, may be degenerate, $(1,1)$-form. Karabegov \cite{Kar} showed that deformation quantizations with separation of variables on
$(M,\omega_{-1})$ are in one-to-one correspondence with such formal
deformations.
Given a star product $\star$ of anti-Wick type, its Karabegov form is computed as following:
Let $z^1, \ldots, z^n$ be local holomorphic coordinates on
an open subset $U$ of $M$. Then there exists a set of formal functions on $U$, denoted by
$u^1, \ldots, u^n$,
\begin{align*}
  u^k = \frac{1}{\nu} u^k_{-1} + u^k_0 + \nu u^k_1  + \nu^2 u^k_2
 + \cdots,
\end{align*}
satisfying
$u^k \star z^l - z^l \star u^k = \delta^{kl}$.
The Karabegov form of $\star$, which is independent of the coordinates chosen, is given by $\hat\omega \vert_U = - \sqrt{-1}\,\bar\partial (\sum_k u^k dz^k)$.

Let $G=(V,E)$ be a digraph that can be
obtained by inserting a finite number of vertices to edges of a semistable graph $G^{ss}$.
Similar to Definition \ref{dfGS}, we may call such $G$ a \emph{generalized stabilizable graph} if
$G^{ss}$ is stabilizable. The stabilization graph of $G^{ss}$, denoted by $G^s$, is also called the stabilization graph of $G$.

By Lemma \ref{t5} and Lemma \ref{t6}, we know that any strong digraph must be one of the
following: (i) A generalized stabilizable graph; (ii) A single vertex without loops; (iii) A connected linear digraph (i.e. a directed cycle with $n\geq1$ vertices).

Let $h$ be an arbitrary $\mathbb C$-valued function on the set of strong stable graphs and $\{\xymatrix{*+[o][F-]{1}}\}$. We define a function $\alpha_h$ on
the set of all strong digraphs by
\begin{equation*}
\alpha_h(G)=\begin{cases} (-1)^{|V(G)|-|V(G^s)|} h(G^s), & G \mbox{ is a generalized stabilizable graph,}
\\
-1, & G \mbox{ is a single vertex without loops,}\\
(-1)^{n+1}h(\xymatrix{*+[o][F-]{1}}), & G \mbox{ is a directed cycle with } n\geq1 \mbox{ vertices.}
\end{cases}
\end{equation*}

\begin{theorem}\label{thmstar} Let $h$ and $\alpha_h$ be the functions given above. For any functions $f_1$ and $f_2$ on a K\"ahler manifold,
we have the following anti-Wick type star product
\begin{equation} \label{eqstar}
f_1\star f_2(x)=\sum_{\Gamma\in\mathcal G_1^{ss}}^{strong}
\nu^{w(\Gamma)}\frac{1}{|{\rm Aut}(\Gamma)|}\prod_{G\in SCC(\Gamma_-)}\alpha_h(G)
\,\Gamma^{op}(f_1,f_2),
\end{equation}
where $G$ runs over all strongly connected components of $\Gamma_-$ and the partition function $\Gamma^{op}(f_1,f_2)$ is obtained by
taking antiholomorphic and holomorphic derivatives of $\Gamma$ separately on $f_1$ and $f_2$.

The Karabegov form of the star product \eqref{eqstar}
is given by
\begin{equation} \label{eqform}
\hat\omega=\frac1\nu\omega_{-1}-h(\xymatrix{*+[o][F-]{1}})Ric
-\sqrt{-1}\partial\bar\partial\sum_{G\in\mathcal G^{ss}}^{strong}\nu^{w(G)}\frac{\alpha_h(G)}{|{\rm Aut}(G)|}G,
\end{equation}
where $Ric=\sqrt{-1}\partial\bar\partial\log\det g$ is the Ricci curvature.
\end{theorem}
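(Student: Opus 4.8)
The plan is to recognize \eqref{eqstar} as the unique star product with separation of variables whose Karabegov form is \eqref{eqform}, and to deduce \eqref{eqform} itself from a Karabegov-type characterization combined with the composition formula \eqref{eqt15}. First I would verify that the right-hand side of \eqref{eqstar} is well-defined as a formal series: each term has $\nu$-degree $w(\Gamma)\geq 0$ since $\Gamma$ is semistable (so $|E|\geq |V|$ after accounting for the pointed vertex $f$), and for fixed weight only finitely many strong semistable pointed graphs contribute, because stabilizability forces $\Gamma^s$ to be a stable graph of the same weight and there are finitely many such. Then I would check the normalization conditions \eqref{eqone}: the weight-zero term is the single diagram with $f_1,f_2$ joined by no edges, giving $C_0(f_1,f_2)=f_1f_2$; the weight-one terms are a single edge from the $f_2$-vertex to the $f_1$-vertex (contributing $g^{k\bar l}\partial_{\bar l}f_2\,\partial_k f_1$ with the conventional sign) plus possibly a one-vertex SCC, and the antisymmetrization in \eqref{eqone} recovers the Poisson bracket \eqref{eqpoi}.

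The heart of the argument is associativity together with the separation-of-variables property. Rather than checking associativity of \eqref{eqstar} directly, I would invoke Karabegov's classification: a star product with separation of variables (anti-Wick type) is uniquely determined by its Karabegov form \eqref{eqomega}, and conversely every formal deformation \eqref{eqomega} arises this way. So it suffices to (a) exhibit a formal $(1,1)$-form $\hat\omega$ of the type \eqref{eqform}, and (b) show that the bidifferential operators defined by \eqref{eqstar} are precisely the ones produced by Karabegov's construction from that $\hat\omega$. For step (b) I would use the characterization of the Karabegov form recalled just before the theorem: one must solve $u^k\star z^l - z^l\star u^k=\delta^{kl}$ for formal functions $u^k$ and read off $\hat\omega|_U=-\sqrt{-1}\,\bar\partial(\sum_k u^k dz^k)$. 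Writing $\star$ via \eqref{eqstar}, the equation $u^k\star z^l - z^l\star u^k=\delta^{kl}$ becomes a recursion on the coefficients $u^k_j$; because $z^l$ is holomorphic, the only diagrams surviving in $z^l\star u^k$ are those with no antiholomorphic derivative on $z^l$, and the combinatorics of \eqref{eqstar} forces the generating diagrams for $\sum_k u^k\,dz^k$ to be exactly the strong stable graphs $G$ with the weight $\alpha_h(G)/|{\rm Aut}(G)|$, with the one-vertex case contributing the $-h(\xymatrix{*+[o][F-]{1}})\mathrm{Ric}$ term (since the single-vertex SCC evaluates against $\log\det g$). This is the step I expect to be the main obstacle: carefully matching the SCC-decomposition of $\Gamma_-$ appearing in \eqref{eqstar} with the $\bar\partial$-exactness structure of \eqref{eqform}, i.e. showing that ``stripping the pointed vertex and contracting each SCC'' is dual to ``differentiating the potential'', so that the product of $\alpha_h$ over SCC's reassembles into $\exp$ of the potential series $-\sqrt{-1}\partial\bar\partial\sum_G \nu^{w(G)}\alpha_h(G)G/|{\rm Aut}(G)|$.

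Concretely I would organize the verification as follows. Any strong digraph, by Lemma \ref{t5} and Lemma \ref{t6} and the trichotomy stated before the theorem, is a generalized stabilizable graph, a loopless single vertex, or a directed cycle; this is exactly why $\alpha_h$ is defined by three cases, and it lets me evaluate $\prod_{G\in SCC(\Gamma_-)}\alpha_h(G)$ term by term. Expanding the exponential of the potential in \eqref{eqform} produces, via the Leibniz rule, a sum over ways of attaching the generating graphs $G$ (with multiplicities governed by $|{\rm Aut}|$) to the two argument functions $f_1,f_2$; I would show this sum is in bijection with the sum over strong semistable pointed graphs $\Gamma$ in \eqref{eqstar}, the bijection sending a multiset of $G$'s together with their attachment data to the pointed graph $\Gamma$ whose SCC's (after deleting $f$) are exactly those $G$'s, the automorphism-count bookkeeping being the standard orbit-stabilizer argument already used in the proof of Theorem \ref{t3}. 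The directed-cycle case of $\alpha_h$ accounts for SCC's that are cycles rather than genuinely stable graphs (their stabilization being the single vertex $\xymatrix{*+[o][F-]{1}}$), and the loopless-single-vertex case with value $-1$ accounts for the combinatorial correction coming from $\log\det g = \log\det(g_{i\bar j})$ versus its derivatives; I would check the sign $(-1)^{n+1}$ for an $n$-cycle against the trace expansion of $\log\det g$, matching the Coefficient Theorem \eqref{eqc} philosophy of Corollary \ref{t12}. Finally, once $\hat\omega$ is identified as the Karabegov form, associativity and the two conditions in \eqref{eqone} are automatic from Karabegov's theorem, completing the proof; alternatively, associativity of \eqref{eqstar} can be checked directly using the composition formula \eqref{eqt15} restricted to the subalgebra $\mathscr S$ of strong stable pointed graphs, which is precisely the setting in which \eqref{eqstar} lives.
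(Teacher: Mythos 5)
Your central computation---producing the candidate
$u^k=\tfrac1\nu\partial_k\Phi-h(\xymatrix{*+[o][F-]{1}})\partial_k\log\det g-\sum_G\nu^{w(G)}\alpha_h(G)\partial_k G/|{\rm Aut}(G)|$
and verifying $u^k\star z^l-z^l\star u^k=\delta^{kl}$ by a cancellation between the two ways a graph can arise (with the pointed vertex absorbed into an SCC versus kept as a separate factor $G$)---is exactly what the paper does, and your description of the SCC bookkeeping and of the role of the trichotomy for strong digraphs is on target. The normalization \eqref{eqone} is also checked in the paper, though via the first two terms of the associated formal Berezin transform rather than directly from the weight-$0$ and weight-$1$ diagrams; that difference is cosmetic.

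The genuine gap is in how you propose to obtain associativity. Your primary route is circular: the recipe ``solve $u^k\star z^l-z^l\star u^k=\delta^{kl}$ and set $\hat\omega=-\sqrt{-1}\,\bar\partial(\sum_k u^k\,dz^k)$'' is the procedure for \emph{reading off} the Karabegov form of a product already known to be an associative star product with separation of variables; it is not a sufficient condition for a given bilinear operation to coincide with Karabegov's star product for $\hat\omega$. A non-associative deformation satisfying $f\star h=fh$ for antiholomorphic $f$ and $h\star g=hg$ for holomorphic $g$ can still admit solutions $u^k$ of that equation, so solving it does not let you import associativity from Karabegov's classification. To make your step (b) rigorous you would have to verify Karabegov's full recursive characterization of the product (e.g.\ the condition on the left-multiplication operators $L_{\partial_k\Phi}$), which is a substantially stronger statement than the single commutator equation. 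The paper sidesteps this by establishing associativity independently, by the same graph-counting argument as \cite[Prop.\ 4.5]{Xu3}, and only then computing the Karabegov form via the $u^k$ equation. Your closing remark that associativity ``can alternatively be checked directly'' using the composition formula \eqref{eqt15} points in the right direction, but as written it is an unexecuted alternative rather than the load-bearing step it needs to be; note also that \eqref{eqt15} composes covariant differential operators acting on one function, whereas star-product associativity is a statement about bidifferential operators, so some translation (precisely the content of the argument in \cite{Xu3}) is still required.
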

\begin{proof}
The first two terms of formal Berezin transform corresponding to \eqref{eqstar} is
$$I(f)=f+\Big[\xymatrix{\bullet
              \ar@(ur,dr)[]^{1}}\Big]+\cdots,$$
which implies that $\star$ satisfies \eqref{eqone}. The associativity can be verified by
the same argument as \cite[Prop. 4.5]{Xu3}. By definition, in order to prove \eqref{eqform},
we need only check that
\begin{equation}
u^k=\frac1\nu\frac{\partial \Phi}{\partial z^k}-h(\xymatrix{*+[o][F-]{1}})\frac{\partial\log\det g}{\partial z^k}
+\sum_{G\in\mathcal G_0^{ss}}\nu^{w(G)}\, \frac{-\alpha_h(G)}{|{\rm Aut}(G)|}\,\frac{\partial G}{\partial z^k}
\end{equation}
satisfy $u^k\star z^l-z^l u^k=\delta^{kl}$ for $1\leq k,l\leq n$.
The coefficient of $\nu^0$ in $u^k\star_B z^l-z^l u^k$ is equal to
$$\bigg[\xymatrix{\bullet \ar@(ur,dr)[]^{1}}\bigg]^{op}\left(\frac{\partial \Phi}{\partial z^k},z^l\right)
=\frac{\partial^2\Phi}{\partial z^k\partial\bar z^l}=\delta^{kl}.$$

In general, a graph $H$ appearing in $u^k\star z^l-z^l u^k$ has the following form
$$H=\quad\begin{tabular}{c}\xymatrix@C=2mm@R=7mm{ &\ar[dl]_{\bar l} &&\\
 *+++[o][F-]{\Gamma} \ar@/^/@{->} @<1pt> [rrr] \ar
@{-->}[rrr] \ar@/_/@{->} @<-1pt> [rrr] &&& *++[o][F-]{G} \ar[ul]_k}\end{tabular},$$
where $G$ is a strong graph.
It may either come from $\dot H^{op}(\frac{\partial\Phi}{\partial z^k},z^l)$ or $\Gamma^{op}(\frac{\partial G}{\partial z^k},z^l)$,
where $\dot H$ is obtained
from $H$ by gluing the head of $k$ and the tail of $\bar l$.
The coefficient of $H$ in $u^k\star z^l-z^l u^k$ is equal to
$$\prod_{K\in SCC(\dot H_-)}\alpha_h(K)+(-\alpha_h(G))\prod_{K\in SCC(\Gamma_-)}\alpha_h(K)=0,$$
as claimed.
\end{proof}

By specializing the functions $h$ and $\alpha_h$, the above theorem recovers previous known star products: Berezin, Berezin-Toeplitz, Karabegov-Bordemann-Waldmann (standard)
and its dual. For example, take $h(G)=\det(A(G)-I)$ for stable graphs, then $\alpha_h(G)=\det(A(G)-I)$ for all strong graphs (cf. \cite[Lem. 3.9]{Xu3}).
We get the Berezin star product.

From Lemma \ref{t7}, Corollary \ref{t10} and the discussion in \cite[Rem. 4.2]{Xu3}, once \eqref{eqform} is given, the above theorem may be regarded as a special case of Gammelgaard's formula \cite{Gam}.

\begin{proposition} Let $C$ be a constant and $h_C$ be the function given by
\begin{equation}
h_C(G)=\sum_{L\in\mathscr L(G)} \frac{(-1)^{|V(G)|+1+p(L)}C^{p(L)}}{|{\rm Aut}(G)|},
\end{equation}
where $L$ runs over linear subgraphs (including empty subgraph) of $G$ and $p(L)$ the number of components of $L\in\mathscr L(H)$.
Then the corresponding star product \eqref{eqstar} is
\begin{equation} \label{eqstar2}
f_1\star f_2(x)=\sum_{\Gamma\in\mathcal G_1^{ss}}^{strong}
\frac{\nu^{w(\Gamma)}}{|{\rm Aut}(\Gamma)|}\sum_{L\in\mathscr L(\Gamma_-)} \frac{(-1)^{|V(\Gamma)|-1+p(L)}C^{p(L)}}{|{\rm Aut}(\Gamma)|}
\,\Gamma^{op}(f_1,f_2).
\end{equation}
The dual opposite of \eqref{eqstar2} is a star product of Wick type given by
\begin{equation} \label{eqBT}
f_1\star' f_2(x)=\sum_{\Gamma}\nu^{w(\Gamma)}\frac{(-1)^{|E(\Gamma)|}C^{\ell(\Gamma)}}{|{\rm Aut}(\Gamma)|}
\Gamma(f_1,f_2),
\end{equation}
where $\Gamma$ runs over all semistable pointed graphs such that each SCC of $\Gamma_-$ is either a single vertex
or a linear digraph, $\ell(\Gamma)$ is the number of linear digraphs in the SCC's of $\Gamma_-$ and $\Gamma(f_1,f_2)=\Gamma^{op}(f_2,f_1)$.
\end{proposition}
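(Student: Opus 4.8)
The plan is to obtain \eqref{eqstar2} as the special case $h=h_C$ of Theorem~\ref{thmstar}, and then to deduce \eqref{eqBT} from \eqref{eqstar2} by applying the dual-opposite operation for star products with separation of variables (Karabegov~\cite{Kar}; see also \cite{Xu3}), which sends an anti-Wick product to a Wick product. The definition of $h_C$ is tailored so that it is built out of the Weyl function $\sum_{L}C^{p(L)}$ of Lemma~\ref{t11}, so it plugs directly into Theorem~\ref{thmstar}; the only real work in the first half is to evaluate the associated function $\alpha_{h_C}$ on each of the three classes of strongly connected digraph in the trichotomy recalled before Theorem~\ref{thmstar} (generalized stabilizable, single vertex without loops, directed cycle). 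For a single vertex without loops $\alpha_{h_C}=-1$ straight from the definition; for a directed cycle on $n$ vertices $\alpha_{h_C}=(-1)^{n+1}h_C(\xymatrix{*+[o][F-]{1}})$, where $h_C$ on the one-vertex loop is read off in one line since its only linear subgraphs are the empty one and the loop itself. The case of a generalized stabilizable $G$ is where Lemma~\ref{t11} enters: here $\alpha_{h_C}(G)=(-1)^{|V(G)|-|V(G^s)|}h_C(G^s)$, and because $\sum_{L}C^{p(L)}$ is a Weyl function and is moreover unchanged when a degree-two vertex is inserted on an edge (the linear subgraphs and their component numbers are in bijection), one may rewrite $h_C(G^s)$ as a sum over $\mathscr L(G)$; this is the $C$-parameter version of the identity in \cite[Lem.~3.9]{Xu3} for $\det(A(\cdot)-I)$. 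The outcome is a single formula $\alpha_{h_C}(G)=\sum_{L\in\mathscr L(G)}\pm\,C^{p(L)}$ valid for every strong $G$, with a sign depending only on $|V(G)|$ and $p(L)$.

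Next I would substitute this into \eqref{eqstar}. For a strong pointed graph $\Gamma$ the matrix $A(\Gamma_-)$ is block triangular for the decomposition of $\Gamma_-$ into strongly connected components, and every linear subgraph of $\Gamma_-$ is the disjoint union of linear subgraphs of the individual components, with the number of components $p$ and the vertex count additive over the blocks. Hence the product $\prod_{G\in SCC(\Gamma_-)}\alpha_{h_C}(G)$ multiplies out to a single sum over $L\in\mathscr L(\Gamma_-)$ with weight $C^{p(L)}$ and a sign which, using $|V(\Gamma)|=|V(\Gamma_-)|+1$, is exactly the one appearing in \eqref{eqstar2}. Collecting the powers of $\nu$ and the automorphism factors supplied by Theorem~\ref{thmstar} then yields \eqref{eqstar2} after routine bookkeeping.

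For \eqref{eqBT} I would translate the dual-opposite operation into a manipulation of the graph expansion of \eqref{eqstar2}. Concretely, it reverses the edge directions and trades the inner sum over a linear subgraph $L\subset\Gamma_-$ for a direct enumeration of semistable pointed graphs each of whose strongly connected components (after deleting the distinguished vertex) is a single vertex or a directed cycle --- precisely the index set of \eqref{eqBT} --- with $p(L)$ becoming the number $\ell(\Gamma)$ of cyclic components and the sign $(-1)^{|V(\Gamma)|-1+p(L)}$ becoming $(-1)^{|E(\Gamma)|}$. The main obstacle is exactly this last step: performing the dual-opposite transformation at the level of graphs and verifying that it carries \eqref{eqstar2} into \eqref{eqBT} with the stated signs, powers of $C$, index set and automorphism normalization; the evaluation of $\alpha_{h_C}$ and the reassembly over strongly connected components are routine once Theorem~\ref{thmstar} and Lemma~\ref{t11} are available.
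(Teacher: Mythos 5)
Your proposal takes essentially the same route as the paper, whose entire proof consists of the two sentences ``the proof of \eqref{eqstar2} is obvious'' and ``the proof of \eqref{eqBT} is similar to the argument of [Xu3, Thm.\ 4.3]'': your substitution of $h_C$ into Theorem~\ref{thmstar}, the case-by-case evaluation of $\alpha_{h_C}$ on the three classes of strong digraphs using Lemma~\ref{t11} and subdivision-invariance of $\sum_L C^{p(L)}$, and the reassembly of $\prod_{G\in SCC(\Gamma_-)}\alpha_{h_C}(G)$ into a single sum over $\mathscr L(\Gamma_-)$ via block-triangularity of $A(\Gamma_-)$ is exactly the computation the paper calls obvious. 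For \eqref{eqBT} you describe the same graph-level dual-opposite argument that the paper delegates wholesale to \cite[Thm.\ 4.3]{Xu3}; leaving that verification unexecuted is therefore not a gap relative to the paper's own proof, though it is the only substantive step in the second half. One caution on bookkeeping: the printed exponent $|V(G)|+1+p(L)$ in $h_C$, the extra $|{\rm Aut}(G)|$ inside $h_C$, and the doubled $1/|{\rm Aut}(\Gamma)|$ in \eqref{eqstar2} cannot all be simultaneously consistent with the required $C=1$ specialization to the Berezin product $\sum_\Gamma \det(A(\Gamma_-)-I)\,\Gamma/|{\rm Aut}(\Gamma)|$, so when your ``single unified formula'' for $\alpha_{h_C}$ appears to fail on the single-vertex-without-loops case, the fault lies in the paper's typography rather than in your argument; the $C=1$ check is the right way to pin the signs down.
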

\begin{proof}
The proof of \eqref{eqstar2} is obvious. The proof of \eqref{eqBT} is similar to the argument of \cite[Thm. 4.3]{Xu3}.
\end{proof}

When $C=1$, \eqref{eqstar} and \eqref{eqBT} are respectively the Berezin and Berezin-Toeplitz star products (cf. \cite[\S 4]{Xu3}).
When $C=0$, \eqref{eqstar} and \eqref{eqBT} are respectively the Karabegov-Bordemann-Waldmann star product and its dual (cf. \cite[\S 6]{Xu3}).

\appendix

\vskip 30pt
\section{Enumeration of contractible semistable trees} \label{ap}
\begin{lemma} Let $k,m\geq2$.
Then $t_{k,m}(n)=0$ when $n>k+m-2$ and
\begin{gather}
t_{k,m}(n)=t_{m,k}(n),\qquad t_{k,2}(k)=(2k-3)!!,\\
t_{k,m}(2)=2^k+2^m-k-m-3,\\ \label{eqt10}
t_{k,m}(3)=\frac{1}{2}(3^{k+1}+3^{m+1})-(2^k+2^m)(k+m)-5(2^k+2^m)\\
+2^{k+m}+\frac{1}{2}(k^2+m^2)+\frac{7}{2}(k+m)+km+7.\nonumber
\end{gather}
\end{lemma}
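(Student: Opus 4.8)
The plan is to treat the five assertions in order of increasing difficulty, the bulk of the work being the closed formula for $t_{k,m}(3)$.

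\emph{Vanishing and symmetry.} An $n$-vertex decorated tree has $n-1$ internal edges, hence $2(n-1)$ internal half-edges, together with the $k+m$ external legs; therefore $\sum_{v}\big(\deg^+(v)+\deg^-(v)\big)=2(n-1)+k+m$. Semistability forces each summand to be at least $3$, so $3n\le 2(n-1)+k+m$, i.e. $n\le k+m-2$, which gives $t_{k,m}(n)=0$ for $n>k+m-2$. For the symmetry $t_{k,m}(n)=t_{m,k}(n)$, reversing every edge and interchanging barred with unbarred legs is an involution on decorated trees; it preserves semistability (the condition is symmetric in $\deg^+$ and $\deg^-$) and sends contractible edges to contractible edges (the two cases $\deg^+(u)=1$, $\deg^-(v)=1$ are exchanged), hence restricts to a bijection preserving the number of vertices.

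\emph{The extremal case $t_{k,2}(k)=(2k-3)!!$.} Now $n=k+m-2$ with $m=2$, so equality holds in the handshake estimate and every vertex is \emph{strictly} semistable, i.e. is either a ``merge'' ($\deg^-=2$, $\deg^+=1$) or a ``split'' ($\deg^-=1$, $\deg^+=2$); counting inward half-edges (there are $(n-1)+m=k+1$ of them) shows there is exactly one merge vertex $w$. Since $w$ is the only vertex with $\deg^+=1$ and every edge is contractible, no edge can end at $w$ (it would also have to start at $w$), so the two inward slots of $w$ are the legs $\bar b_1,\bar b_2$; a short acyclicity argument then forces $w$'s single outward slot to be an edge, making $w$ a leaf of the underlying tree. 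Deleting $w$ exhibits the remaining $k-1$ split vertices as a rooted tree in which each vertex has at most two children, and filling the unused outward slots with the $k$ labelled legs $a_1,\dots,a_k$ turns this into a rooted full binary tree with $k$ labelled leaves; there are $(2k-3)!!$ of these, and conversely every such configuration is a contractible strictly semistable tree. This gives the claimed bijection.

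\emph{The cases $n=2$ and $n=3$.} For $n=2$ the tree is a single edge $u\to v$; semistability and contractibility of that edge force $\deg^+(u)=1$ or $\deg^-(v)=1$, and in each case semistability pins down precisely which subsets of the external legs may sit at $u$ and at $v$. Inclusion--exclusion over the two events ``$\deg^+(u)=1$'' and ``$\deg^-(v)=1$'' yields $t_{k,m}(2)=(2^m-m-1)+(2^k-k-1)-1$, the stated expression. For $n=3$ the underlying tree is a path $A-B-C$ with three directed shapes: the directed path $A\to B\to C$, the shape in which $B$ is a local sink, and the shape in which $B$ is a local source. The sink and source shapes carry the automorphism $A\leftrightarrow C$, contributing a factor $\tfrac12$ (the two end leg-sets are disjoint and hence never equal). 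In each shape I would impose semistability at all three vertices and contractibility of both edges, the latter again by inclusion--exclusion over the (at most two) events witnessing each edge's contractibility; the admissible distributions of the labelled legs are then counted by elementary sums such as $\#\{(X,Y,Z):X\sqcup Y\sqcup Z=\{\bar b_1,\dots,\bar b_m\},\ |X|\ge2,\ |Z|\ge2\}$. Adding the three contributions and simplifying produces \eqref{eqt10}; the $k\leftrightarrow m$ symmetry of that formula, its vanishing at $(k,m,n)=(2,2,3)$, and its agreement with $(2k-3)!!$ at $t_{k,2}(k)$ serve as consistency checks.

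\emph{Main obstacle.} The genuinely laborious step is the $n=3$ count: tracking which vertex or edge-end witnesses contractibility, carrying the semistability inequalities through the inclusion--exclusion, correctly applying the automorphism factor in the sink and source shapes, and finally collecting the resulting exponential and polynomial terms into the closed form \eqref{eqt10}. The $n=2$ computation is a baby case of the same bookkeeping, while the vanishing, the symmetry, and the structural reduction in the extremal case are short.
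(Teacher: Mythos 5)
Your proposal is correct, and for the $n=2$ and $n=3$ counts it runs along essentially the same lines as the paper: enumerate the directed shapes of the underlying tree, distribute the labelled legs subject to semistability and contractibility of every edge, and divide by $2$ for the two three-vertex shapes admitting the end-swap automorphism. Your inclusion--exclusion over the contractibility witnesses is just a repackaging of the paper's case split: already at $n=2$ your $(2^m-m-1)+(2^k-k-1)-1$ is the paper's $\sum_{i=0}^{k-2}\binom{k}{i}+\sum_{i=1}^{m-2}\binom{m}{i}$. You also supply proofs of the two facts the paper dismisses as obvious, namely the vanishing for $n>k+m-2$ via the handshake bound $3n\le 2(n-1)+k+m$ and the symmetry via edge reversal. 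The one genuinely different ingredient is your treatment of $t_{k,2}(k)=(2k-3)!!$: you exploit the rigidity of the extremal case (equality in the handshake bound forces every vertex to have degree exactly $3$, hence a unique merge vertex carrying $\bar b_1,\bar b_2$ and feeding a single outward edge) to build a bijection with rooted full binary trees on $k$ labelled leaves, whereas the paper argues by induction on $k$, producing each $k$-vertex tree from a $(k-1)$-vertex one by grafting the leg $a_k$ onto any of the $k-2$ edges or $k-1$ outward legs, which yields the factor $2k-3$ directly. Your bijection gives more structural insight and a cleaner verification that all such trees really are contractible; the paper's induction is shorter. The only caveat is that your $n=3$ count is a plan rather than an executed calculation, so \eqref{eqt10} is not actually verified in your write-up; but the setup matches the paper's (which likewise leaves the final binomial simplification to the reader), and the remaining work is elementary bookkeeping rather than a missing idea.
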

\begin{proof}
The first two equations are obvious. Let us prove $t_{k,2}(k)=(2k-3)!!$. When $k=2$, we have $t_{2,2}(2)=1$. A $k$-vertex tree in $\mathscr T_g(a_1\cdots a_k | \bar b_1\bar b_2)$
can be obtained by connecting the outward leg $a_k$ to a new node in the middle of any of the edges and outward legs of a $(k-1)$-vertex tree
in $\mathscr T_g(a_1\cdots a_{k-1} | \bar b_1\bar b_2)$. There are $k-2$ edges and $k-1$ outward legs in a $(k-1)$-vertex tree
in $\mathscr T_g(a_1\cdots a_{k-1} | \bar b_1\bar b_2)$; therefore, $t_{k,2}(k)$ is larger than $ t_{k-1,2}(k-1)$ by a factor of $2k-3$.
So we get $t_{k,2}(k)=(2k-3)!!$.

There is a unique $2$-vertex tree, so we have
$$t_{k,m}(2)=\sum_{i=0}^{k-2}\binom{k}{i}+\sum_{i=1}^{m-2}\binom{m}{i}=2^k+2^m-k-m-3.$$

There are three $3$-vertex directed trees,
\begin{equation}\label{eqt9}
\xymatrix@C=4mm@R=3mm{ \circ \ar[r] & \circ \ar[r] & \circ}\quad
\xymatrix@C=4mm@R=3mm{ \circ \ar[r] & \circ & \circ \ar[l]}\quad
\xymatrix@C=4mm@R=3mm{ \circ & \circ \ar[l]\ar[r] & \circ}
\end{equation}
We compute their respective contributions to $t_{k,m}(3)$,
\begin{gather*}
t_{k,m}(3)=\sum^{k-3}_{i=1}\binom{k}{i}\sum^{k-2-i}_{j=1}\binom{k-i}{j}+\sum_{i=1}^{k-2}\binom{k}{i}\sum_{j=0}^{m-2}\binom{m}{j}
+\sum_{i=0}^{m-3}\binom{m}{i}\sum_{j=1}^{m-2-i}\binom{m-i}{j}\\
+\frac{1}{2}\sum_{i=2}^{m-2}\binom{m}{i}\sum^{m-i}_{j=2}\binom{m-i}{j}+\frac{1}{2}\sum^{k-2}_{i=2}\binom{k}{i}\sum_{j=2}^{k-i}\binom{k-i}{j},
\end{gather*}
where the first three summations come from the first tree of \eqref{eqt9}, the last two summations come from the second
and third trees of \eqref{eqt9} respectively. We can simplify the binomials to get \eqref{eqt10}.
\end{proof}

\begin{table}[h]
\caption{$t_{k,m}(n)$, numbers of contractible semistable trees} \label{tb1}
\begin{tabular}{|c||c|c|c|c|c|c|}
\hline
    $(k,m)$                                            &$n=1$&$n=2$&$n=3$&$n=4$ &$n=5$  &$n=6$
\\\hline       $(2,2)$      &$1$&$1$& &  &  &
\\\hline       $(3,2)$                  &$1$&$4$&$3$&  &  &
\\\hline       $(4,2)$                  &$1$&$11$&$25$&$15$ &  &
\\\hline       $(3,3)$                  &$1$&$7$&$15$&$9$ &  &
\\\hline       $(5,2)$                  &$1$&$26$&$130$&$210$  &$105$  &
\\\hline       $(4,3)$                  &$1$&$14$&$58$&$90$ & $45$ &
\\\hline       $(6,2)$                  &$1$&$57$&$546$&$1750$ &$2205$  & $945$
\\\hline       $(5,3)$                  &$1$&$29$&$208$&$628$ & $765$ &$325$
\\\hline       $(4,4)$                  &$1$&$21$&$150$&$432$ &$529$  & $225$
\\\hline
\end{tabular}
\end{table}

$$ \ \ \ \ $$

\

\end{document}